\def\bkR{{\rm I\kern-.17em R}}
\newtheorem{theorem}{Theorem}
\newtheorem{corollary}[theorem]{Corollary}
\newtheorem{definition}{Definition}
\newtheorem{proposition}{Proposition}
\newtheorem{remark}{Remark}
\newenvironment{proof}[1][Proof]{\noindent\textbf{#1.} }{\ \rule{0.5em}{0.5em}}
\begin{document}

\title{A Directional Curvature Formula for Convex Bodies in $\mathbb{R}^{n}$ 
\thanks{%
This paper is dedicated to the memory of dear colleague Vladimir V.
Goncharov (University of \'{E}vora, Portugal), who passed away when this
work was just beginning, we only had an idea of how to prove the main result
in $\mathbb{R}^{3}$.} }
\author{F\'{a}tima F. Pereira \thanks{%
E-mail: fmfp@uevora.pt } \thanks{\emph{Departamento de Matem\'{a}tica,
Escola de Ci\^{e}ncias e Tecnologia, Centro de Investiga\c{c}\~{a}o em Matem%
\'{a}tica e Aplica\c{c}\~{o}es (CIMA), Instituto de Investiga\c{c}\~{a}o e
Forma\c{c}\~{a}o Avan\c{c}ada, Universidade de \'{E}vora, Rua Rom\~{a}o
Ramalho 59, 7000-671, \'{E}vora, Portugal}} \\
\emph{Universidade de \'{E}vora, Rua Rom\~{a}o Ramalho 59, 7000-671, \'{E}%
vora, Portugal}}
\date{}
\maketitle

\begin{abstract}
For a compact convex set $F\subset \mathbb{R}^{n}$, with the origin in its
interior, we present a formula to compute the curvature at a fixed point on
its boundary, in the direction of any tangent vector. This formula is
equivalent to the existing ones, but it is easier to apply.

\bigskip 

\textbf{Key words}: \ \ \emph{convex set; curvature; implicit function
theorem; tangent vector.}

\textbf{Mathematical Subject Classification\ (2000)}: 52A20, 53A04
\end{abstract}

\section{Introduction}

In \cite{GP1} the authors proposed some concepts concerning the geometric
structure of a closed convex bounded set $F$, with zero in its interior, in
a Hilbert space $H$. Inspired essentially from the geometry of Banach spaces
(see \cite{Megginson}), they introduced three moduli of local rotundity for
the set $F$, one symmetrical (using the norm of $H$) and two asymmetric
(using the "asymmetric norm" given by the Minkowski functional of $F$).
Using the symmetrical modulus the authors defined the concept of strict
convexity graduated by some parameter $\alpha >0.$ The main numerical
characteristic resulting from these considerations is the curvature (and the
respective curvature radius) of $F$, which shows how rotund the set $F$ is
near a fixed boundary point $\xi $ watching along a given direction $\xi
^{\ast }$. Considering the polar set of $F$, $F^{o}$, they defined also the
modulus of local smoothness and the local smoothness of $F^{o}$. As
well-known (see, for example, \cite{Lovaglia, Megginson, Smulian, Smulian2})
the strict convexity of a convex closed bounded set $F$ with zero in its
interior is strongly related to the smoothness of $F^{o}$, but in \cite{GP1}
that relation was quantified. In particular, a local asymmetric version of
the Lindenstrauss duality theorem \cite[Proposition 4.2]{GP1} was proved
there, which quantitatively establishes the duality between local smoothness
and local rotundity. Thus, the curvature of $F$ can be considered also as a
numerical characteristic of $F^{o}$, showing how sleek $F^{o}$ is in a
neighbourhood of a boundary point $\xi ^{\ast }$ if you look along a
direction $\xi $. Applying this theorem, it was obtained a characterization
of the curvature of $F$ in terms of the second derivative of its dual
Minkowski functional \cite[Proposition 4.4]{GP1}. From what we have just
said, and not only (for more results see \cite{GP1, GP2}), the formula for
curvature is, from a theoretical point of view, very useful, but in practice
it is very difficult to use even in $\mathbb{R}^{2}$ as we can see in \cite[%
Example 8.4]{GP1}. Then, in this paper, we propose, in some sense and for
some kind of convex bodies $F$ (compact convex sets with interior points) in 
$\mathbb{R}^{n}$, $n\geq 2$, an equivalent formula to compute its curvature
but easier to use. Namely, in Theorem \ref{Teo1} below, given $\xi $ at the
boundary of $F$, $\partial F$, near which $\partial F$ is given by an
implicit equation, we present a formula for the curvature of $F$ at $\xi $
in the direction of any tangent vector. For this, and for a fixed tangent
vector, we will consider the intersection curve between $F$ and a suitable
plane, but without using the plane equations or the curve expression. In a
few words, we can say that \cite{GP1} gives us an approximate idea of the
shape of $F$ in a global neighbourhood of $\xi $, while in this article the
exact shape of $F$ near $\xi $ in each tangent direction is obtained.

\bigskip

Before moving on to the work itself, let us review more precisely what is
already done in this area.

A definition for curvature similar to the formula that will be obtained
here, and called directional curvature, appears in \cite{Abatzoglou} for a
(not necessarily convex) \emph{C}$^{2}$-manifold embedded in a Hilbert space.

In \cite[p.14]{Busemann} (see also \cite{Adiprasito, Schneider}), for a
convex body $F$ in $\mathbb{R}^{n}$ $\left( n\geq 2\right) ,$ a smooth point 
$\xi $ in $\partial F$ (smooth means that at $\xi $ there exists only one
supporting hyperplane to $F$), an interior unit normal vector $\xi ^{\ast }$
of $F$ at $\xi ,$ and an unit vector $\xi ^{\ast \ast }$ orthogonal to $\xi
^{\ast },$ H. Busemann considered the $2$-dimensional halfplane%
\begin{equation*}
H\left( \xi ,\xi ^{\ast },\xi ^{\ast \ast }\right) =\left\{ \eta \in \mathbb{%
R}^{n}:\eta =\xi +\lambda \xi ^{\ast }+\mu \xi ^{\ast \ast }\text{ with }%
\lambda ,\mu \in \mathbb{R}\text{ and }\mu \geq 0\right\} ,
\end{equation*}%
which intersects $\partial F$ in a plane convex curve. Denoting by $r_{\eta} $, for $\eta \in H\left( \xi ,\xi ^{\ast },\xi ^{\ast \ast }\right) \cap
\partial F$ near $\xi $, the radius of the circle with centre on the normal
line $\xi +\mathbb{R}^{+}\xi ^{\ast }$ containing both $\xi $ and $\eta $,
the author defined%
\begin{equation*}
\rho _{l}^{\xi ^{\ast \ast }}\left( \xi \right) :=~\underset{\eta
\rightarrow \xi }{\lim \inf \ }r_{\eta }\text{,\ \ \ \ }\rho _{u}^{\xi ^{\ast
\ast }}\left( \xi \right) :=\ \underset{\eta \rightarrow \xi }{\lim \sup \ }r_{\eta }
\end{equation*}%
as the lower and upper curvature radius, respectivelly. If the numbers $%
\gamma _{l}^{\xi ^{\ast \ast }}\left( \xi \right) :=\left( \rho _{l}^{\xi
^{\ast \ast }}\left( \xi \right) \right) ^{-1}$ and $\gamma _{u}^{\xi ^{\ast
\ast }}\left( \xi \right) :=\left( \rho _{u}^{\xi ^{\ast \ast }}\left( \xi
\right) \right) ^{-1}$ (called lower and upper curvature, respectivelly) are
equal and finite, he says that the curvature of $F$ at $\xi $ in direction $%
\xi ^{\ast \ast }$ exists and is equal to the common value.

Differential geometry of intersection curves of two (or more) surfaces in $%
\mathbb{R}^{3}$ (or higher dimension) were studied by many authors (see, for
example, \cite{Alessio, Goldman, Ye} and the bibliography therein). There
are studies in which all the surfaces are defined implicitly, others in
which all are parametrically defined, and others in which there are surfaces
of both types. For this work, we are only interested in those defined
implicitly. At \cite{Alessio, Goldman, Ye} the authors present formulas (or
algorithms) for computing differential geometric properties (such as tangent
vector, normal vector, curvatures and torsion) of the intersection curve. In 
\cite[(5.4)]{Goldman} the author derives a formula for the curvature of the
curve defined by the intersection of $n-1$ implicit surfaces in $\mathbb{R}%
^{n}$. This formula is laborious to apply when the space has dimension $%
n\geq 4,$ because we need to do several operations with the gradients of the
all functions that implicity define the surfaces. On the contrary, the
formula that will be presented in this paper seems easier, as it uses only
the function that defines implicitly the convex body.

\bigskip

In Section 2 of this paper we introduce some notations, definitions and one
example where we can see that the definition of curvature presented in \cite%
{GP1} can give us only an approximate value. In Section 3, we present the
conditions on $F,$ the normal cone to $F$ at a convenient $\xi \in \partial
F $, the tangent hyperplane of $F$ at $\xi $, the definition of directional
curvature, some its properties and its relation to the definition of \cite%
{GP1}. Section 4 is dedicated to the main result of this paper and its
proof. In Section 5 we relate the directional curvature of $F$ with the
radius of a suitable sphere. The relationship between our formula and
Goldman's one is proved in Section 6. Finally, the Section 7 is dedicated to
the examples.

\bigskip

\section{Basic notations and definitions\label{Exemplo}}

We will consider in the space $\mathbb{R}^{n}$, $n\in \mathbb{N}$, $n\geq 2,$
with the usual inner product $\left\langle \cdot ,\cdot \right\rangle $ and
the norm $\left\Vert \cdot \right\Vert ,$ a compact convex set $F$ with the
vector null of $\mathbb{R}^{n}$ (represented by $\mathbf{0}$) in its
interior $\mbox{int}F$. We denote by $F^{o}$ the \emph{polar set} of $F,$
i.e.,

\begin{equation*}
F^{o}:=\left\{ \xi ^{\ast }\in \mathbb{R}^{n}:\left\langle \xi ,\xi ^{\ast
}\right\rangle \leq 1\;\;\forall \xi \in F\right\} \text{.}
\end{equation*}%
Together with the \emph{Minkowski functional} $\rho _{F}\left( \cdot \right) 
$ defined by 
\begin{equation*}
\rho _{F}\left( \xi \right) :=\inf \left\{ \lambda >0:\xi \in \lambda
F\right\}
\end{equation*}%
we introduce the \emph{support function} $\sigma _{F}:\mathbb{R}%
^{n}\rightarrow \mathbb{R}^{+}$, 
\begin{equation*}
\sigma _{F}\left( \xi ^{\ast }\right) :=\sup \left\{ \left\langle \xi ,\xi
^{\ast }\right\rangle :\xi \in F\right\} .
\end{equation*}%
Observe that%
\begin{equation*}
\rho _{F}\left( \xi \right) =\sigma _{F^{o}}\left( \xi \right) \text{,}
\end{equation*}%
and, consequently,%
\begin{equation}
\frac{1}{\left\Vert F\right\Vert }\left\Vert \xi \right\Vert \leq \rho
_{F}\left( \xi \right) \leq \left\Vert F^{o}\right\Vert \left\Vert \xi
\right\Vert \text{, \ \ \ }\xi \in \mathbb{R}^{n}\text{,}
\label{Equivalence}
\end{equation}%
where $\left\Vert F\right\Vert :=\sup \left\{ \left\Vert \xi \right\Vert
:\xi \in F\right\} $. The inequalities (\ref{Equivalence}) mean that $\rho
_{F}\left( \cdot \right) $ is a sublinear functional "equivalent" to the
norm $\left\Vert \cdot \right\Vert $. It is not a norm since $-F\neq F$\ in
general.

As usual, we represent by $\partial F$ the \emph{boundary} of $F.$ In what
follows we will use the so-called \emph{duality mapping} $\mathfrak{J}%
_{F}:\partial F^{o}\rightarrow \partial F$ that associates\ the set \ 
\begin{equation*}
\mathfrak{J}_{F}\left( \xi ^{\ast }\right) :=\left\{ \xi \in \partial
F:\left\langle \xi ,\xi ^{\ast }\right\rangle =1\right\}
\end{equation*}%
with each $\xi ^{\ast }\in \partial F^{o}$. We say that $\left( \xi ,\xi
^{\ast }\right) $ is a \emph{dual pair} when $\xi ^{\ast }\in \partial F^{o}$
and $\xi \in \mathfrak{J}_{F}\left( \xi ^{\ast }\right) $.

The \emph{normal cone} to $F$ at $\xi $, in the sense of Convex Analysis, is
given by 
\begin{equation*}
\mathbf{N}_{F}\left( \xi \right) :=\left\{ \zeta ^{\ast }\in \mathbb{R}%
^{n}:\left\langle \eta -\xi ,\zeta ^{\ast }\right\rangle \leq 0\text{~for
every }\eta \in F\right\},
\end{equation*}%
and the \emph{proximal normal cone} to $F$ at $\xi $ is 
\begin{equation*}
\mathbf{N}_{F}^{P}\left( \xi \right) :=\left\{ \zeta ^{\ast }\in \mathbb{R}%
^{n}:\text{there exists }\sigma \geq 0\text{ such that }\left\langle \eta
-\xi ,\zeta ^{\ast }\right\rangle \leq \sigma \left\Vert \eta -\xi
\right\Vert ^{2}\text{ for every }\eta \in F\right\} .
\end{equation*}
Since $F$ is closed and convex we have (see \cite[Proposition 1.1.10]{CLSW}) 
\begin{equation}
\mathbf{N}_{F}^{P}\left( \xi \right) =\mathbf{N}_{F}\left( \xi \right) .
\label{15}
\end{equation}
It is easy to show that $\mathbf{N}_{F}\left( \xi \right) \cap \partial F^{o} $ is the pre-image of the mapping $\mathfrak{J}_{F}\left( \xi \right) $%
, $\mathfrak{J}_{F}^{-1}\left( \cdot \right) $, calculated at $\xi $.

\bigskip

The \emph{tangent cone} to $F$ at $\xi $ is the polar of $\mathbf{N}_{F}\left( \xi \right)$, since $\mathbf{N}_{F}\left( \xi \right) $ is, in
fact, a cone, it is given by 
\begin{equation*}
\left\{ u\in \mathbb{R}^{n}:\left\langle u,\zeta ^{\ast }\right\rangle \leq 0%
\text{~for every }\zeta ^{\ast }\in \mathbf{N}_{F}\left( \xi \right)
\right\} .
\end{equation*}%
We will only work with the \emph{hyperplane tangent }to the set $F$ at the
point $\xi :$%
\begin{equation}
\mathbf{T}_{F}\left( \xi \right) :=\left\{ u\in \mathbb{R}^{n}:\left\langle
u,\zeta ^{\ast }\right\rangle =0\text{~for every }\zeta ^{\ast }\in \mathbf{N%
}_{F}\left( \xi \right) \right\} .  \label{PlanoT}
\end{equation}

\bigskip

Following \cite[Definition 3.2]{GP1}, for each dual pair $\left( \xi ,\xi
^{\ast }\right) $ the \emph{modulus of rotundity of }$F$ at $\xi $ with
respect to (w.r.t.) $\xi ^{\ast }$ is 
\begin{equation}
\widehat{\mathfrak{C}}_{F}\left( r,\xi ,\xi ^{\ast }\right) :=\inf \left\{
\left\langle \xi -\eta ,\xi ^{\ast }\right\rangle :\eta \in F,\;\left\Vert
\xi -\eta \right\Vert \geq r\right\} \text{, \ \ }r>0\text{,}  \label{Def3.2}
\end{equation}%
and $F$ is said to be \emph{strictly convex }(or \emph{rotund}) at $\xi $\
w.r.t. $\xi ^{\ast }$ if 
\begin{equation}
\widehat{\mathfrak{C}}_{F}\left( r,\xi ,\xi ^{\ast }\right) >0\text{ \ \ for
all \ \ }r>0\text{.}  \label{rotundity}
\end{equation}%
If (\ref{rotundity}) is fulfilled then $\xi $ is an \emph{exposed point} of $%
F$ and the vector $\xi ^{\ast }$\ \emph{exposes} $\xi $ in the sense that
the hyperplane $\left\{ \eta \in \mathbb{R}^{n}:\left\langle \eta ,\xi
^{\ast }\right\rangle =\sigma _{F}\left( \xi ^{\ast }\right) \right\} $
touches $F$ only at the point $\xi $, or, in other words, $\mathfrak{J}%
_{F}\left( \xi ^{\ast }\right) =\left\{ \xi \right\} $. So, in this case, $%
\xi $ is well defined whenever $\xi ^{\ast }$ is fixed.

\begin{definition}[\protect\cite{GP1}]
Fix $\xi ^{\ast }\in \partial F^{o}$, and let $\xi $ be the unique element
of $ \ \mathfrak{J}_{F}\left( \xi ^{\ast }\right) $. The set $F$\ is said to be 
\emph{strictly convex of order 2 }(\emph{at the point}\ $\xi $) \emph{w.r.t.}
$\xi ^{\ast }$ if 
\begin{equation}
\hat{\gamma}_{F}\left( \xi ,\xi ^{\ast }\right) =\liminf_{\substack{ \left(
r,\eta ,\eta ^{\ast }\right) \rightarrow \left( 0+,\xi ,\xi ^{\ast }\right) 
\\ \eta \in \mathfrak{J}_{F}\left( \eta ^{\ast }\right) ,\,\eta ^{\ast }\in
\partial F^{o}}}\frac{\widehat{\mathfrak{C}}_{F}\left( r,\eta ,\eta ^{\ast
}\right) }{r^{2}}>0\text{.}  \label{gama}
\end{equation}
The number
\begin{equation*}
\hat{\varkappa}_{F}\left( \xi ,\xi ^{\ast }\right) =\frac{1}{\left\Vert \xi
^{\ast }\right\Vert }\ \hat{\gamma}_{F}\left( \xi ,\xi ^{\ast }\right)
\end{equation*}%
is said to be the (\emph{square}) \emph{curvature} of $F$\emph{\ at }$\xi
\in \partial F$\emph{\ w.r.t. }$\xi ^{\ast }$.
\end{definition}

\bigskip

\paragraph{An example \label{Mot}}

Consider the compact convex set 
\begin{equation*}
F:=\left\{ \left( \xi _{1},\xi _{2}\right) \in \mathbb{R}^{2}:\left\vert \xi
_{2}\right\vert \leq 1-\xi _{1}^{4},\;-1\leq \xi _{1}\leq 1\right\} .
\end{equation*}
For any arbitrary dual pair $\left( \xi ,\xi ^{\ast }\right) $, with $\xi
:=\left( \xi _{1},\xi _{2}\right) $, by the symmetry of $F$, just consider
the case $\xi _{2}\geq 0$ and $\xi _{1}\leq 0$. Using \cite[Example 8.3]{GP1}
we get:

\begin{enumerate}
\item[(i)] If $\xi _{2}>0$ then the (unique) normal vector $\xi ^{\ast }$ to 
$F$ at $\xi ,$ such that $\left\langle \xi ,\xi ^{\ast }\right\rangle =1$ is
given by%
\begin{equation*}
\xi ^{\ast }=\frac{1}{1+3\xi _{1}^{4}}\left( 4\xi _{1}^{3},1\right) .
\end{equation*}%
After a hard work, we obtained 
\begin{equation}
\hat{\varkappa}_{F}\left( \xi ,\xi ^{\ast }\right) =\frac{\hat{\gamma}%
_{F}\left( \xi ,\xi ^{\ast }\right) }{\left\Vert \xi ^{\ast }\right\Vert }%
\leq \frac{6\xi _{1}^{2}}{\sqrt{1+16\xi _{1}^{6}}}  \label{2.10}
\end{equation}%
and%
\begin{equation}
\hat{\varkappa}_{F}\left( \xi ,\xi ^{\ast }\right) \geq \frac{6\xi _{1}^{2}}{%
\sqrt{1+16\xi _{1}^{6}}~\Sigma ^{2}\left( \xi _{1}\right) }\text{,}
\label{Example3(3)}
\end{equation}%
where $\Sigma \left( \xi _{1}\right) :=\sqrt{1+\left( \underset{k=0}{\overset{3}{\sum }}\left\vert \xi _{1}\right\vert ^{k}\right) ^{2}}$. Combining the
estimates (\ref{2.10}) and (\ref{Example3(3)}) we see that the curvature $%
\hat{\varkappa}\left( \xi ,\xi ^{\ast }\right) $ is of order $O\left( \xi
_{1}^{2}\right) $ (as $\left\vert \xi _{1}\right\vert \rightarrow 0$). In
particular, $\hat{\varkappa}_{F}$ is equal to zero at the points $\left(
0,\pm 1\right) $.

\item[(ii)] If $\xi :=\left( -1,0\right) $ we have%
\begin{equation*}
\mathbf{N}_{F}\left( \xi \right) =\left\{ \left( v_{1},v_{2}\right) \in 
\mathbb{R}^{2}:v_{1}\leq -4\left\vert v_{2}\right\vert \right\} \text{.}
\end{equation*}%
For $\xi ^{\ast }\in \partial \mathbf{N}_{F}\left( \xi \right) $, by the
lower semicontinuity of the function $\left( \xi ,\xi ^{\ast }\right)
\mapsto \hat{\gamma}_{F}\left( \xi ,\xi ^{\ast }\right) $, we can apply the
same reasoning as above, but not for $\xi ^{\ast }\in \mbox{int}\mathbf{N}_{F}\left( \xi \right) $. In this last case we have $\hat{\varkappa}%
_{F}\left( \xi ,\xi ^{\ast }\right) =+\infty $ (see \cite[Proposition 3.8]%
{GP1}).
\end{enumerate}

\bigskip

Here we got only the estimates (\ref{2.10}) and (\ref{Example3(3)}), but
using the theory developed in this paper we will get an equality (see
Example \ref{Ex1}).

\bigskip

\section{Directional curvature\label{Directional}}

In everything that follows we consider a compact convex set $F\subset 
\mathbb{R}^{n}$, $n\geq 2$, with $\mathbf{0}\in \mbox{int}F$. Fixed $\xi
\in \partial F$ assume that there are $\delta >0$ and $f:\mathbb{R}^{n}%
\mathbf{\rightarrow }\mathbb{R}$ of class $\mathcal{C}^{2}$ at $\xi +\delta 
\mathbf{B}$ ($\mathbf{B}\subset \mathbb{R}^{n}$ represents the open unit
ball), such that%
\begin{equation*}
F\subset \left\{ x\in \mathbb{R}^{n}:f\left( x\right) \leq 0\right\} ,
\end{equation*}%
\begin{equation}
\left\langle \xi ,\nabla f\left( \xi \right) \right\rangle >0,  \label{5}
\end{equation}%
and such that, for $x\in \xi +\delta \mathbf{B}$, we have $x\in \partial F$
if and only if $f\left( x\right) =0$.

\begin{remark}
\label{r1}Thanks to (\ref{5}) and the continuity of $\nabla f\left( \cdot
\right) $ at $\xi $ there is $0<\delta ^{\prime }\leq \delta $ such that 
\begin{equation}
\inf_{\eta \in \xi +\delta ^{\prime }\mathbf{B}}~\left\langle \eta ,\nabla
f\left( \eta \right) \right\rangle >0.  \label{5'}
\end{equation}%
\bigskip In particular, we have $\nabla f\left( \eta \right) \neq \mathbf{0}$
for any $\eta \in \xi +\delta ^{\prime }\mathbf{B.}$
\end{remark}

\begin{proposition}
\label{ConeN}We have 
\begin{equation*}
\mathbf{N}_{F}\left( \eta \right) =\bigcup_{\lambda \geq 0}\lambda \nabla
f\left( \eta \right) ,\ \ \ \ \eta \in \partial F\cap \left( \xi +\delta
^{\prime }\mathbf{B}\right) .
\end{equation*}
\end{proposition}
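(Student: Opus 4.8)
The plan is to establish the two inclusions separately. Fix $\eta\in\partial F\cap(\xi+\delta'\mathbf{B})$. Since $\xi+\delta'\mathbf{B}\subseteq\xi+\delta\mathbf{B}$, the standing hypothesis on $f$ gives $f(\eta)=0$, while Remark \ref{r1} yields $\nabla f(\eta)\neq\mathbf{0}$ and $\langle\eta,\nabla f(\eta)\rangle>0$.

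\emph{Inclusion $\bigcup_{\lambda\geq 0}\lambda\nabla f(\eta)\subseteq\mathbf{N}_F(\eta)$.} Since $\mathbf{N}_F(\eta)$ is a cone, it is enough to check that $\nabla f(\eta)\in\mathbf{N}_F(\eta)$, that is, $\langle\zeta-\eta,\nabla f(\eta)\rangle\leq 0$ for every $\zeta\in F$. If this failed for some $\zeta\in F$, then by convexity $\eta_t:=(1-t)\eta+t\zeta\in F$ for all $t\in[0,1]$, hence $f(\eta_t)\leq 0$ because $F\subseteq\{x:f(x)\leq 0\}$; but differentiability of $f$ at $\eta$ together with $f(\eta)=0$ gives $f(\eta_t)=t\langle\nabla f(\eta),\zeta-\eta\rangle+o(t)>0$ for $t>0$ small, a contradiction. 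Thus $\lambda\nabla f(\eta)\in\mathbf{N}_F(\eta)$ for every $\lambda\geq 0$.

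\emph{Inclusion $\mathbf{N}_F(\eta)\subseteq\bigcup_{\lambda\geq 0}\lambda\nabla f(\eta)$.} Let $\zeta^\ast\in\mathbf{N}_F(\eta)$, the case $\zeta^\ast=\mathbf{0}$ being trivial. The affine map $g(x):=\langle x-\eta,\zeta^\ast\rangle$ satisfies $g\leq 0=g(\eta)$ on $F$, so $\eta$ maximizes $g$ over $F$, and in particular over the smaller set $\partial F\cap(\xi+\delta'\mathbf{B})$, which by the hypothesis on $f$ coincides with $\{x\in\xi+\delta'\mathbf{B}:f(x)=0\}$; since $\nabla f\neq\mathbf{0}$ on $\xi+\delta'\mathbf{B}$ (Remark \ref{r1}), this is a $\mathcal{C}^2$ hypersurface through $\eta$. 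The Lagrange multiplier rule (equivalently, parametrising this hypersurface via the implicit function theorem and differentiating $g$ along it at $\eta$) yields $\zeta^\ast=\nabla g(\eta)=\lambda\nabla f(\eta)$ for some $\lambda\in\mathbb{R}$. Finally, $\mathbf{0}\in F$ gives $\langle\mathbf{0}-\eta,\zeta^\ast\rangle\leq 0$, i.e. $\lambda\langle\eta,\nabla f(\eta)\rangle\geq 0$; as $\langle\eta,\nabla f(\eta)\rangle>0$ by (\ref{5'}), we obtain $\lambda\geq 0$, completing the argument.

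The computations involved (a first-order expansion and the Lagrange rule) are routine; the only delicate point is the local identification of $\partial F$ with the level set $\{f=0\}$, which is precisely what the standing assumptions on $f$ provide, together with the nonvanishing of $\nabla f$ near $\xi$ that makes this level set a genuine $\mathcal{C}^2$ manifold and validates the constraint qualification. This is the step I would spell out most carefully.
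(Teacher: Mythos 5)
Your proof is correct, but it takes a different and in fact more elementary route than the paper's. The paper works entirely through the proximal normal cone: it first reduces to $\mathbf{N}_{F}^{P}\left( \eta \right)$ via (\ref{15}), proves $\nabla f\left( \eta \right) \in \mathbf{N}_{F}^{P}\left( \eta \right)$ by combining the lower-$\mathcal{C}^{2}$ estimate $f\left( y\right) \geq f\left( \eta \right) +\left\langle \nabla f\left( \eta \right) ,y-\eta \right\rangle -\sigma \left\Vert y-\eta \right\Vert ^{2}$ with the proximal-normal characterization of \cite[Proposition 1.1.5]{CLSW}, and for the reverse inclusion applies the Lagrange rule to the quadratically penalized function $y\mapsto \left\langle -\zeta ,y\right\rangle +\sigma \left\Vert y-\eta \right\Vert ^{2}$ arising from that same characterization. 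You instead stay with the convex-analysis normal cone throughout: the forward inclusion follows from a first-order expansion of $f$ along a segment inside $F$ (a purely $\mathcal{C}^{1}$ argument that never needs the quadratic error term), and the reverse inclusion applies the Lagrange rule directly to the affine function $x\mapsto \left\langle x-\eta ,\zeta ^{\ast }\right\rangle$, which $\eta$ maximizes over the level set $\left\{ f=0\right\}$ near $\xi$. Your version buys two things: it avoids the nonsmooth-analysis machinery entirely (and shows the statement really only needs $\mathcal{C}^{1}$ regularity plus the constraint qualification $\nabla f\left( \eta \right) \neq \mathbf{0}$ from Remark \ref{r1}), and it makes explicit where the sign $\lambda \geq 0$ comes from --- testing the normal inequality at $\mathbf{0}\in F$ and invoking (\ref{5'}) --- a point the paper's proof leaves implicit in its appeal to the Lagrange rule. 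The paper's version, in exchange, fits the framework of \cite{CLSW} used elsewhere and delivers the proximal statement (\ref{14}) as a byproduct.
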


\begin{proof}
By (\ref{15}), for an arbitrary $\eta \in \partial F\cap \left( \xi +\delta
^{\prime }\mathbf{B}\right)$, it's enough to prove that%
\begin{equation}
\mathbf{N}_{F}^{p}\left( \eta \right) =\bigcup_{\lambda \geq 0}\lambda
\nabla f\left( \eta \right) .  \label{14}
\end{equation}%
Since $f$ is of classe $\mathcal{C}^{2}$ at $\xi +\delta \mathbf{B}$, by 
\cite[Theorem 1.2.5 and Corolary 1.2.6 ]{CLSW}, there are $\sigma ,\rho >0$
such that $\left( \eta +\rho \mathbf{B}\right) \subset \left( \xi +\delta
^{\prime }\mathbf{B}\right) $ and 
\begin{equation*}
f\left( y\right) \geq f\left( \eta \right) +\left\langle \nabla f\left( \eta
\right) ,y-\eta \right\rangle -\sigma \left\Vert y-\eta \right\Vert ^{2}%
\text{,\ \ \ \ \ }\forall y\in \eta +\rho \mathbf{B},
\end{equation*}%
and consequently 
\begin{equation*}
\left\langle \nabla f\left( \eta \right) ,y-\eta \right\rangle \leq \sigma
\left\Vert y-\eta \right\Vert ^{2}\text{,\ \ \ \ \ }\forall y\in \left( \eta
+\rho \mathbf{B}\right) \cap F.
\end{equation*}%
Thanks to \cite[Proposition 1.1.5]{CLSW} $\nabla f\left( \eta \right) \in 
\mathbf{N}_{F}^{p}\left( \eta \right) .$ Since $\mathbf{N}_{F}^{p}\left(
\eta \right) $ is a cone we have, in fact, $\lambda \nabla f\left( \eta
\right) \in \mathbf{N}_{F}^{p}\left( \eta \right) $, $\lambda \geq 0$.

To prove the other inclusion at (\ref{14}) fix $\zeta \in \mathbf{N}_{F}^{p}\left( \eta \right)$. By \cite[Proposition 1.1.5]{CLSW} there is a
constant $\sigma >0$ such that 
\begin{equation*}
\left\langle \zeta ,y-\eta \right\rangle \leq \sigma \left\Vert y-\eta
\right\Vert ^{2},
\end{equation*}%
whenever $y$ belongs to $\partial F\cap \left( \xi +\delta \mathbf{B}\right)$. Put another way, this is equivalent to say that the point $\eta $
minimizes the function $y\mapsto \left\langle -\zeta ,y\right\rangle +\sigma
\left\Vert y-\eta \right\Vert ^{2}$ over all points $y$ satisfying $f\left(
y\right) =0$ and $\left\Vert y-\xi \right\Vert <\delta .$ The Lagrange
Multiplier Rule of classical calculus provides a scalar $\lambda \geq 0$ such
that $\zeta =\lambda \nabla f\left( \eta \right)$, which completes the
proof.
\end{proof}

\bigskip

Consequently, for any $\eta \in \partial F\cap \left( \xi +\delta ^{\prime }%
\mathbf{B}\right) $ fixed, $\mathfrak{J}_{F}^{-1}\left( \eta \right) =%
\mathbf{N}_{F}\left( \eta \right) \cap \partial F^{o}$ is a singleton, and
the unique $\eta ^{\ast }\in \mathfrak{J}_{F}^{-1}\left( \eta \right) $ is
given by 
\begin{equation}
\eta ^{\ast }=\frac{1}{\left\langle \eta ,\nabla f\left( \eta \right)
\right\rangle }\nabla f\left( \eta \right) .  \label{R2}
\end{equation}%
This means that $\eta ^{\ast }$ is well defined whenever $\eta $ is fixed.

On the other hand, $\nabla f\left( \eta \right) \neq $ $\mathbf{0}$ implies
that there is a first $i\in I:=\left\{ 1,...,n\right\} $ such that 
\begin{equation}
f_{x_{i}}\left( \eta \right) :=\frac{\partial f}{\partial x_{i}}\left( \eta
\right) \neq 0.  \label{fi}
\end{equation}%
Fixed such $i$ the hyperplane tangent to $F$ at $\eta $ is given by (see (%
\ref{PlanoT}))%
\begin{eqnarray*}
\mathbf{T}_{F}\left( \eta \right) &=&\left\{ v\in \mathbb{R}%
^{n}:\left\langle v,\nabla f\left( \eta \right) \right\rangle =0\right\} \\
&=&\left\{ \left( v_{1},...,v_{n}\right) \in \mathbb{R}^{n}:v_{i}=-%
\sum_{j=1,j\neq i}^{n}\frac{f_{x_{j}}\left( \eta \right) }{f_{x_{i}}\left(
\eta \right) }v_{j}\right\} .
\end{eqnarray*}%
Denote by $u^{j}\left( \eta \right)$, $j \in I\backslash \left\{ i\right\}$, the vector of $\mathbb{R}^{n}$ with $1$ in the $j$th coordinate, $-\frac{f_{x_{j}}\left( \eta \right)}{f_{x_{i}}\left(\eta \right)}$ in the $i$th
coordinate and $0$ in the others. Since $f$ is of class $\mathcal{C}^{2}$ at 
$\xi +\delta \mathbf{B}$, $u^{j}\left( \eta \right) $ will be close to $%
u^{j}\left( \xi \right) ,$ whenever $\eta $ is close to $\xi .$

\bigskip

For our results we need to introduce the following. Given $\eta \in \xi
+\delta ^{\prime }\mathbf{B}$ and $u\left( \eta \right) \in \mathbf{T}%
_{F}\left( \eta \right) $, $u\left( \eta \right) \neq \mathbf{0}$, consider
the subset of $\mathbb{R}^{n}$ 
\begin{equation*}
P\left( \eta ,u\left( \eta \right) \right) :=\mbox{span}\left\{ \nabla
f\left( \eta \right) ,u\left( \eta \right) \right\} +\eta ,
\end{equation*}%
where $\mbox{span}\left\{ \nabla f\left( \eta \right) ,u\left( \eta
\right) \right\} $ means the generated space by the vectors $\nabla f\left(
\eta \right) $ and $u\left( \eta \right) $. Note that the vectors $\nabla
f\left( \eta \right) $ and $u\left( \eta \right) $ are linearly independent,
so the set $P\left( \eta ,u\left( \eta \right) \right) $ is, in fact, a $2$%
-dimensional plane in $\mathbb{R}^{n}$ (it will simply be called a plane).

\bigskip

Bellow we introduce some directional notions, based on the respective notions
presented in \cite{GP1}, and already seen here in Section \ref{Exemplo}. To
simplify the notation, in general, we will not refer to the unique $\xi
^{\ast }$, given by (\ref{R2}).

\begin{definition}
\label{d1}The $\emph{2}$\emph{-dimensional modulus of strict convexity of }$%
F $\emph{\ at }$\xi \in \partial F$\emph{\ }(with respect to $\xi ^{\ast }$) 
\emph{in the direction of the vector} $u\left( \xi \right) \in \mathbf{T}%
_{F}\left( \xi \right) \backslash \left\{ \mathbf{0}\right\} $ is given by 
\begin{equation*}
\widehat{\mathfrak{C}}_{F}\left( r,\xi ,u\left( \xi \right) \right) =\inf
\left\{ \left\langle \xi -\eta ,\xi ^{\ast }\right\rangle :\eta \in F\cap
P\left( \xi ,u\left( \xi \right) \right) ,\left\Vert \xi -\eta \right\Vert
\geq r\right\} \text{, \ \ \ \ }r>0.
\end{equation*}%
The set $F$ is \emph{strictly convex at }$\xi $ \emph{in the direction of }$%
u\left( \xi \right) $ if $\ \widehat{\mathfrak{C}}_{F}\left( r,\xi ,u\left(
\xi \right) \right) >0$ for all $r>0.$
\end{definition}

\begin{remark}
\label{modulo}Since the set $F\subset \mathbb{R}^{n}$ is compact and convex
we have the equalities 
\begin{eqnarray*}
\widehat{\mathfrak{C}}_{F}\left( r,\xi ,u\left( \xi \right) \right) &=&\inf
\left\{ \left\langle \xi -\eta ,\xi ^{\ast }\right\rangle :\eta \in F\cap
P\left( \xi ,u\left( \xi \right) \right) ,\left\Vert \xi -\eta \right\Vert
=r\right\} \\
&=&\inf \left\{ \left\langle \xi -\eta ,\xi ^{\ast }\right\rangle :\eta \in
\partial F\cap P\left( \xi ,u\left( \xi \right) \right) ,\left\Vert \xi
-\eta \right\Vert =r\right\} ,
\end{eqnarray*}%
for any $r>0$ and $u\left( \xi \right) \in \mathbf{T}_{F}\left( \xi \right)
\backslash \left\{ \mathbf{0}\right\} .$
\end{remark}

\begin{proposition}
\label{single}Let $\xi \in \partial F$, $\xi^{\ast}\in \partial F^{o}$
given by (\ref{R2}) and $u\left( \xi \right) \in \mathbf{T}_{F}\left( \xi
\right) $\emph{, }$u\left( \xi \right) \neq \mathbf{0}.$ If $\widehat{%
\mathfrak{C}}_{F}\left( r,\xi ,u\left( \xi \right) \right) >0$ for all $r>0$
then $ \ \mathfrak{J}_{F}\left( \xi ^{\ast }\right) \cap P\left( \xi ,u\left(
\xi \right) \right) =\left\{ \xi \right\} .$
\end{proposition}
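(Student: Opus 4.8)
The plan is to argue by contradiction, exploiting the elementary fact that membership of a point $\eta$ in $\mathfrak{J}_F\left(\xi^{\ast}\right)$ forces $\left\langle \xi-\eta,\xi^{\ast}\right\rangle=0$, which is exactly the kind of competitor that the strict convexity hypothesis on the $2$-dimensional modulus rules out.

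First I would record the trivial inclusion $\left\{\xi\right\}\subseteq \mathfrak{J}_F\left(\xi^{\ast}\right)\cap P\left(\xi,u\left(\xi\right)\right)$: the plane $P\left(\xi,u\left(\xi\right)\right)=\mbox{span}\left\{\nabla f\left(\xi\right),u\left(\xi\right)\right\}+\xi$ obviously contains $\xi$, while $\xi^{\ast}$ as given by (\ref{R2}) satisfies $\left\langle \xi,\xi^{\ast}\right\rangle=\frac{1}{\left\langle \xi,\nabla f\left(\xi\right)\right\rangle}\left\langle \xi,\nabla f\left(\xi\right)\right\rangle=1$ and lies in $\partial F^{o}$ (this is precisely what was established right after Proposition \ref{ConeN}), so $\xi\in\mathfrak{J}_F\left(\xi^{\ast}\right)$.

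For the reverse inclusion, suppose $\eta\in\mathfrak{J}_F\left(\xi^{\ast}\right)\cap P\left(\xi,u\left(\xi\right)\right)$ with $\eta\neq\xi$, and set $r:=\left\Vert \xi-\eta\right\Vert>0$. Since $\mathfrak{J}_F\left(\xi^{\ast}\right)\subseteq\partial F\subseteq F$, the point $\eta$ belongs to $F\cap P\left(\xi,u\left(\xi\right)\right)$ and has $\left\Vert \xi-\eta\right\Vert\geq r$, hence it is an admissible competitor in the infimum of Definition \ref{d1}, giving
\begin{equation*}
\widehat{\mathfrak{C}}_{F}\left(r,\xi,u\left(\xi\right)\right)\leq \left\langle \xi-\eta,\xi^{\ast}\right\rangle.
\end{equation*}
But $\xi,\eta\in\mathfrak{J}_F\left(\xi^{\ast}\right)$ means $\left\langle \xi,\xi^{\ast}\right\rangle=\left\langle \eta,\xi^{\ast}\right\rangle=1$, so $\left\langle \xi-\eta,\xi^{\ast}\right\rangle=0$, and therefore $\widehat{\mathfrak{C}}_{F}\left(r,\xi,u\left(\xi\right)\right)\leq 0$, contradicting the hypothesis $\widehat{\mathfrak{C}}_{F}\left(r,\xi,u\left(\xi\right)\right)>0$. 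Thus no such $\eta$ exists and the intersection equals $\left\{\xi\right\}$.

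There is essentially no serious obstacle here; the statement is a direct unwinding of the definitions, the only point requiring a word of care being that one must legitimately invoke $\eta\in F$ (from $\mathfrak{J}_F\left(\xi^{\ast}\right)\subseteq\partial F$) in order to use it inside the infimum defining $\widehat{\mathfrak{C}}_{F}\left(r,\xi,u\left(\xi\right)\right)$.
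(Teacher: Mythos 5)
Your proposal is correct and is essentially identical to the paper's own proof: both exhibit a hypothetical second point of $\mathfrak{J}_{F}\left( \xi ^{\ast }\right) \cap P\left( \xi ,u\left( \xi \right) \right)$ as a competitor in the infimum defining $\widehat{\mathfrak{C}}_{F}\left( r,\xi ,u\left( \xi \right) \right)$ with $r=\left\Vert \xi -\overline{\xi }\right\Vert$, and use $\left\langle \xi -\overline{\xi },\xi ^{\ast }\right\rangle =0$ to contradict strict positivity. The only cosmetic difference is that the paper states the modulus is then exactly $0$ (it is nonnegative since $\left\langle \eta ,\xi ^{\ast }\right\rangle \leq 1$ on $F$), while you settle for $\leq 0$, which suffices equally well.
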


\begin{proof}
By construction $\xi \in \mathfrak{J}_{F}\left( \xi ^{\ast }\right) \cap
P\left( \xi ,u\left( \xi \right) \right) .$ If there was $\overline{\xi }\in 
\mathfrak{J}_{F}\left( \xi ^{\ast }\right) \cap P\left( \xi ,u\left( \xi
\right) \right) $ with $\overline{\xi }\neq \xi $, we would have 
\begin{equation*}
\left\langle \xi -\overline{\xi },\xi ^{\ast }\right\rangle =0,
\end{equation*}%
and consequently 
\begin{equation*}
\widehat{\mathfrak{C}}_{F}\left( r,\xi ,u\left( \xi \right) \right) =0,
\end{equation*}%
for $r:=\left\Vert \xi -\overline{\xi }\right\Vert >0,$ which is absurd.
\end{proof}

\begin{definition}
\label{def}The $\emph{2}$\emph{-dimensional curvature of }$F$\emph{\ at }$%
\xi \in \partial F$ (w.r.t. $\xi ^{\ast }$) \emph{in the direction of }$%
u^{j}\left( \xi \right) ,$ $j\in I\backslash \left\{ i\right\}$, is given by%
\begin{equation*}
\hat{\varkappa}_{F}\left( \xi ,u^{j}\left( \xi \right) \right) =\frac{1}{%
\left\Vert \xi ^{\ast }\right\Vert }\hat{\gamma}_{F}\left( \xi ,u^{j}\left(
\xi \right) \right) ,
\end{equation*}%
where%
\begin{equation*}
\hat{\gamma}_{F}\left( \xi ,u^{j}\left( \xi \right) \right) =\underset{ 
_{\substack{ \left( r,\eta \right) \rightarrow \left( 0^{+},\xi \right)  \\ %
\eta \in \partial F}}}{\lim \inf }\frac{\widehat{\mathfrak{C}}_{F}\left(
r,\eta ,u^{j}\left( \eta \right) \right) }{r^{2}}.
\end{equation*}

The set $F$ is said to be \emph{strictly convex of the second order at }$\xi 
$ \emph{in the direction of }$u^{j}\left( \xi \right) $ when $\hat{\varkappa}%
_{F}\left( \xi ,u^{j}\left( \xi \right) \right) >0.$
\end{definition}

\bigskip

Such at \cite[Proposition 3.7]{GP1} we may extend the concept of directional
strict convexity for the case of an arbitrary compact convex solid (do not
assuming that $\mathbf{0}\in \mbox{int}F$). For this, we need to remember
that the interior of any convex set $C$ in $\mathbb{R}^{n}$ relative to its
affine hull (the smallest affine set that includes $C$) is the \emph{%
relative interior} of $C$, denoted by $\mbox{rint}C$.

\begin{proposition}
\label{00}Let $\xi \in \partial F$, $i\in I$ as above, $j\in I\backslash
\left\{ i\right\} $, $y_{1},y_{2}\in \mbox{rint}\left( F\cap P\left( \xi
,u\left( \xi \right) \right) \right) $ and $\xi _{1}^{\ast }\in \mathfrak{J}%
_{F-y_{1}}^{-1}\left( \xi -y_{1}\right) $. Then there is an unique $\xi
_{2}^{\ast }\in \mathfrak{J}_{F-y_{2}}^{-1}\left( \xi -y_{2}\right) $
colinear with $\xi _{1}^{\ast }$ and such that%
\begin{equation}
\frac{1}{\left\Vert \xi _{1}^{\ast }\right\Vert }\hat{\gamma}%
_{F-y_{1}}\left( \xi -y_{1},u^{j}\left( \xi \right) \right) =\frac{1}{%
\left\Vert \xi _{2}^{\ast }\right\Vert }\hat{\gamma}_{F-y_{2}}\left( \xi
-y_{2},u^{j}\left( \xi \right) \right) .  \label{20}
\end{equation}
\end{proposition}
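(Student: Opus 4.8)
The plan is to realise $F-y_{1}$ and $F-y_{2}$ as sets satisfying the standing hypotheses of this section, to identify their duality data, and then to compare their $2$-dimensional moduli point by point; the identity (\ref{20}) will then follow from the way these data transform under translation. Put $w:=u^{j}(\xi)$, $P:=P(\xi,w)$ and, for $k=1,2$, $\tilde{f}_{k}(x):=f(x+y_{k})$, which is $\mathcal{C}^{2}$ near $\xi-y_{k}$. First I would record the transformation rules: for $\eta$ near $\xi$ we have $\nabla\tilde{f}_{k}(\eta-y_{k})=\nabla f(\eta)$, so the index $i$ of (\ref{fi}) still serves, $u^{j}(\eta-y_{k})$ formed from $\tilde{f}_{k}$ coincides with $u^{j}(\eta)$ formed from $f$, and
\begin{equation*}
P\big(\eta-y_{k},u^{j}(\eta-y_{k})\big)=\mbox{span}\{\nabla f(\eta),u^{j}(\eta)\}+(\eta-y_{k})=P\big(\eta,u^{j}(\eta)\big)-y_{k},
\end{equation*}
whence $(F-y_{k})\cap P(\eta-y_{k},u^{j}(\eta-y_{k}))=\big(F\cap P(\eta,u^{j}(\eta))\big)-y_{k}$. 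Next, since $y_{k}\in\mbox{rint}(F\cap P)$ we have $y_{k}+t\nabla f(\xi)\in F\cap P\subset F$ for small $t>0$, and $\nabla f(\xi)\in\mathbf{N}_{F}(\xi)$ by Proposition \ref{ConeN}, so $\langle\xi-y_{k},\nabla f(\xi)\rangle\ge t\Vert\nabla f(\xi)\Vert^{2}>0$; since also $P$ meets $\mbox{int}F$ (for instance $\xi-t\nabla f(\xi)\in\mbox{int}F$ for small $t>0$), one gets $\mbox{rint}(F\cap P)=P\cap\mbox{int}F$, and in particular $y_{k}\in\mbox{int}F$. Thus $F-y_{k}$, with defining function $\tilde{f}_{k}$, boundary point $\xi-y_{k}$ and index $i$, satisfies all the standing assumptions of this section, so Proposition \ref{ConeN}, formula (\ref{R2}) and Definitions \ref{d1}, \ref{def} are available for it.

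Applying Proposition \ref{ConeN} to $F-y_{k}$ gives $\mathbf{N}_{F-y_{k}}(\xi-y_{k})=\bigcup_{\lambda\ge0}\lambda\nabla f(\xi)$, and then (\ref{R2}) shows that $\mathfrak{J}_{F-y_{k}}^{-1}(\xi-y_{k})$ is the singleton $\{\xi_{k}^{\ast}\}$ with
\begin{equation*}
\xi_{k}^{\ast}=\frac{1}{\langle\xi-y_{k},\nabla f(\xi)\rangle}\,\nabla f(\xi),\qquad k=1,2.
\end{equation*}
In particular $\xi_{1}^{\ast}$ and $\xi_{2}^{\ast}$ are positive multiples of $\nabla f(\xi)$, so the unique element $\xi_{2}^{\ast}$ of $\mathfrak{J}_{F-y_{2}}^{-1}(\xi-y_{2})$ is colinear with $\xi_{1}^{\ast}$ (the first assertion), and $\Vert\xi_{k}^{\ast}\Vert^{-1}=\Vert\nabla f(\xi)\Vert^{-1}\langle\xi-y_{k},\nabla f(\xi)\rangle$.

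The core step compares the moduli. Fix $r>0$ and $\eta\in\partial F$ close enough to $\xi$ that $\langle\eta-y_{1},\nabla f(\eta)\rangle$ and $\langle\eta-y_{2},\nabla f(\eta)\rangle$ are positive (valid near $\xi$ by continuity of $\nabla f$). Writing out Definition \ref{d1} for $F-y_{k}$ at $\eta-y_{k}$, whose dual vector by (\ref{R2}) is $\nabla f(\eta)/\langle\eta-y_{k},\nabla f(\eta)\rangle$, and inserting the transformation rules together with the substitution $\mu=\nu-y_{k}$, one finds
\begin{equation*}
\widehat{\mathfrak{C}}_{F-y_{k}}\big(r,\eta-y_{k},u^{j}(\eta-y_{k})\big)=\inf\Big\{\tfrac{\langle\eta-\nu,\,\nabla f(\eta)\rangle}{\langle\eta-y_{k},\,\nabla f(\eta)\rangle}:\nu\in F\cap P(\eta,u^{j}(\eta)),\ \Vert\eta-\nu\Vert\ge r\Big\}.
\end{equation*}
The constraint set does not depend on $k$, so scaling the infimum by the positive factor $\theta(\eta):=\langle\eta-y_{2},\nabla f(\eta)\rangle/\langle\eta-y_{1},\nabla f(\eta)\rangle$ gives
\begin{equation*}
\widehat{\mathfrak{C}}_{F-y_{1}}\big(r,\eta-y_{1},u^{j}(\eta-y_{1})\big)=\theta(\eta)\,\widehat{\mathfrak{C}}_{F-y_{2}}\big(r,\eta-y_{2},u^{j}(\eta-y_{2})\big).
\end{equation*}
Dividing by $r^{2}$ and taking the $\liminf$ as $(r,\eta)\to(0^{+},\xi)$, $\eta\in\partial F$ — which, after the substitutions $\eta-y_{1}$, resp.\ $\eta-y_{2}$, is precisely the limit defining $\hat{\gamma}_{F-y_{1}}(\xi-y_{1},u^{j}(\xi))$, resp.\ $\hat{\gamma}_{F-y_{2}}(\xi-y_{2},u^{j}(\xi))$ — and using that $\theta$ is continuous at $\xi$ with positive value $\theta(\xi)$ (so that, the quotients being nonnegative, $\theta(\xi)$ pulls out of the $\liminf$), I obtain
\begin{equation*}
\hat{\gamma}_{F-y_{1}}\big(\xi-y_{1},u^{j}(\xi)\big)=\frac{\langle\xi-y_{2},\nabla f(\xi)\rangle}{\langle\xi-y_{1},\nabla f(\xi)\rangle}\,\hat{\gamma}_{F-y_{2}}\big(\xi-y_{2},u^{j}(\xi)\big).
\end{equation*}
Multiplying through by $\Vert\xi_{1}^{\ast}\Vert^{-1}=\Vert\nabla f(\xi)\Vert^{-1}\langle\xi-y_{1},\nabla f(\xi)\rangle$ and cancelling, the right side becomes $\Vert\nabla f(\xi)\Vert^{-1}\langle\xi-y_{2},\nabla f(\xi)\rangle\,\hat{\gamma}_{F-y_{2}}(\xi-y_{2},u^{j}(\xi))=\Vert\xi_{2}^{\ast}\Vert^{-1}\hat{\gamma}_{F-y_{2}}(\xi-y_{2},u^{j}(\xi))$, which is (\ref{20}).

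I expect the main obstacle to be bookkeeping rather than conceptual: one must keep careful track, inside the $\liminf$ defining $\hat{\gamma}_{F-y_{k}}$, of the fact that both the moving base point $\eta-y_{k}$ and the moving $2$-plane $P(\eta-y_{k},u^{j}(\eta-y_{k}))$ are genuinely the $-y_{k}$-translates of their counterparts for $F$, and one must justify pulling the positive continuous factor $\theta$ out of the $\liminf$. The check that $y_{k}\in\mbox{int}F$ — which legitimises forming $\hat{\gamma}_{F-y_{k}}$ — is routine but worth stating explicitly.
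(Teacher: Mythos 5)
Your proposal is correct and follows essentially the same route as the paper: both identify $\xi_{1}^{\ast}$ and $\xi_{2}^{\ast}$ (and the moving duals at nearby $\eta$) as explicit positive multiples of $\nabla f(\cdot)$ via (\ref{R2}), establish the pointwise identity $\frac{1}{\Vert \eta _{1}^{\ast }\Vert }\widehat{\mathfrak{C}}_{F-y_{1}}=\frac{1}{\Vert \eta _{2}^{\ast }\Vert }\widehat{\mathfrak{C}}_{F-y_{2}}$ (your factor $\theta(\eta)$ is exactly the ratio $\Vert \eta _{1}^{\ast }\Vert /\Vert \eta _{2}^{\ast }\Vert$), and then divide by $r^{2}$ and pass to the $\liminf$. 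Your extra bookkeeping (translation of the planes, $y_{k}\in \mbox{int}F$, pulling the continuous positive factor out of the $\liminf$) only makes explicit steps the paper leaves implicit.
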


\begin{proof}
First, notice that $\xi _{1}^{\ast }$ is unique and, by (\ref{R2}), is given
by $\frac{1}{\left\langle \xi -y_{1},\nabla f\left( \xi \right)
\right\rangle }\nabla f\left( \xi \right) .$ As the same reason the unique $%
\xi _{2}^{\ast }\in \mathfrak{J}_{F-y_{2}}^{-1}\left( \xi -y_{2}\right) $ is
given by $\frac{1}{\left\langle \xi -y_{2},\nabla f\left( \xi \right)
\right\rangle }\nabla f\left( \xi \right) $, and it is colinear with $\xi
_{1}^{\ast }$.

Now, let us fix $\eta \in \partial F$ close to $\xi $, and the corresponding
vectors $\eta _{1}^{\ast }$ and $\eta _{2}^{\ast }$ (which are close to $\xi
_{1}^{\ast }$ and $\xi _{2}^{\ast }$, respectively). Notice that $\eta
_{1}^{\ast }\in \mathfrak{J}_{F-y_{1}}^{-1}\left( \eta -y_{1}\right) $
implies $\left\langle y-y_{1},\eta _{1}^{\ast }\right\rangle <1$ for any $%
y\in \mbox{int}F$, and we can write 
\begin{equation*}
\eta _{2}^{\ast }=\frac{1}{1+\left\langle y_{1}-y_{2},\eta _{1}^{\ast
}\right\rangle }\eta _{1}^{\ast }.
\end{equation*}%
So, from Definition \ref{d1}, we obtain%
\begin{eqnarray*}
&&\frac{1}{\left\Vert \eta _{2}^{\ast }\right\Vert }\widehat{\mathfrak{C}}%
_{F-y_{2}}\left( r,\eta -y_{2},u^{j}\left( \eta \right) \right) \\
&=&\frac{1}{\left\Vert \eta _{2}^{\ast }\right\Vert }\inf \left\{
\left\langle \eta -y_{2}-\zeta ,\eta _{2}^{\ast }\right\rangle :\zeta \in
\left( F\cap P\left( \eta ,u\left( \eta \right) \right) -y_{2}\right)
,\left\Vert \eta -y_{2}-\zeta \right\Vert \geq r\right\} \\
&=&\frac{1}{\left\Vert \eta _{2}^{\ast }\right\Vert }\inf \left\{
\left\langle \eta -y,\eta _{2}^{\ast }\right\rangle :y\in F\cap P\left( \eta
,u\left( \eta \right) \right) ,\left\Vert \eta -y\right\Vert \geq r\right\}
\\
&=&\frac{1}{\left\Vert \eta _{1}^{\ast }\right\Vert }\inf \left\{
\left\langle \eta -y,\eta _{1}^{\ast }\right\rangle :y\in F\cap P\left( \eta
,u\left( \eta \right) \right) ,\left\Vert \eta -y\right\Vert \geq r\right\}
\\
&=&\frac{1}{\left\Vert \eta _{1}^{\ast }\right\Vert }\widehat{\mathfrak{C}}%
_{F-y_{1}}\left( r,\eta -y_{1},u^{j}\left( \eta \right) \right) ,
\end{eqnarray*}%
i.e.,%
\begin{equation*}
\frac{1}{\left\Vert \eta _{2}^{\ast }\right\Vert }\widehat{\mathfrak{C}}%
_{F-y_{2}}\left( r,\eta -y_{2},u^{j}\left( \eta \right) \right) =\frac{1}{%
\left\Vert \eta _{1}^{\ast }\right\Vert }\widehat{\mathfrak{C}}%
_{F-y_{1}}\left( r,\eta -y_{1},u^{j}\left( \eta \right) \right)
\end{equation*}%
for all $r>0$. Dividing both parts of the last equality by $r^{2}$ and
passing to $\lim \inf $ as $r\rightarrow 0^{+}$, $\eta \rightarrow \xi $ we
easily come to (\ref{20}).
\end{proof}

\bigskip

In the last proof we used the known fact that $\mathbf{N}_{F-y_{i}}\left(
\xi -y_{i}\right) =\mathbf{N}_{F}\left( \xi \right) $.

\bigskip

Remember that $u\left( \xi \right) \in \mathbf{T}_{F}\left( \xi \right)
\backslash \left\{ \mathbf{0}\right\} $ if there are $n-1$ real numbers $%
\alpha _{j}, \ j\in I\backslash \left\{ i\right\} $, not simultaneously null,
such that%
\begin{equation*}
u\left( \xi \right) =\sum_{j=1,\ j\neq i}^{n}\alpha
_{j}u^{j}\left( \xi \right).
\end{equation*}%
This means that $u\left( \xi \right) $ is a vector of $\mathbb{R}^{n}$ with%
\begin{equation*}
-\sum_{j=1,\ j\neq i}^{n}\alpha _{j}\frac{f_{j}\left( \xi
\right) }{f_{i}\left( \xi \right) }
\end{equation*}%
at the $i$th coordinate and $\alpha _{j}$, $j\in I\backslash \left\{
i\right\} $, at the $j$th coordinate. If for any $\eta \in \partial F$ near $%
\xi $ we define $u\left( \eta \right) $ as the non-zero vector of $\mathbb{R}%
^{n}$ corresponding to $u\left( \xi \right) $, i.e., the $j$th coordinates, $%
j\in I\backslash \left\{ i\right\} $, are the same in both vectors, and the $%
i$th coordinate of $u\left( \eta \right) $ is given by%
\begin{equation*}
-\sum_{j=1, \ j\neq i}^{n}\alpha _{j}\frac{f_{j}\left( \eta
\right) }{f_{i}\left( \eta \right) },
\end{equation*}%
then it will be possible to put%
\begin{equation*}
\hat{\varkappa}_{F}\left( \xi ,u\left( \xi \right) \right) =\frac{1}{%
\left\Vert \xi ^{\ast }\right\Vert }\hat{\gamma}_{F}\left( \xi ,u\left( \xi
\right) \right) =\frac{1}{\left\Vert \xi ^{\ast }\right\Vert }\underset{ 
_{\substack{ \left( r,\eta \right) \rightarrow \left( 0^{+},\xi \right)  \\ %
\eta \in \partial F}}}{\lim \inf }\frac{\widehat{\mathfrak{C}}_{F}\left(
r,\eta ,u\left( \eta \right) \right) }{r^{2}}.
\end{equation*}%
Note that such vector $u\left( \eta \right) $ is, in fact, in $\mathbf{T}%
_{F}\left( \eta \right) .$

\begin{proposition}
Let $\xi \in \partial F$. If there is $u\left( \xi \right) \in \mathbf{T}%
_{F}\left( \xi \right) \backslash \left\{ \mathbf{0}\right\} $ such that $%
\hat{\gamma}_{F}\left( \xi ,u\left( \xi \right) \right) >0$, then we will
have $\mathfrak{J}_{F}\left( \eta ^{\ast }\right) \cap P\left( \eta ,u\left(
\eta \right) \right) =\left\{ \eta \right\} $ for every $\eta $ close enough
to $\xi $ (and respective $\eta ^{\ast }$ given by (\ref{R2})).
\end{proposition}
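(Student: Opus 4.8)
The plan is to read the statement as a \emph{uniform} version of Proposition \ref{single} and to deduce it from that proposition, applied at $\eta$ in place of $\xi$, with the associated $\eta^{\ast}$ of (\ref{R2}) in place of $\xi^{\ast}$ and $u(\eta)$ in place of $u(\xi)$. The first thing I would check is that, for $\eta$ in a sufficiently small ball $\xi+\varepsilon\mathbf{B}$ with $\varepsilon\le\delta'$, all these objects are legitimate: shrinking $\varepsilon$ if necessary we keep $f_{x_i}(\eta)\neq 0$ (by continuity of $\nabla f$ at $\xi$), so $u(\eta)$ is well defined; it is nonzero since its $j$th coordinates are the fixed scalars $\alpha_j$, not all zero; it lies in $\mathbf{T}_{F}(\eta)$, as observed just before the statement; and, by Proposition \ref{ConeN} together with (\ref{R2}), $\eta^{\ast}$ is the unique element of $\mathfrak{J}_{F}^{-1}(\eta)\subset\partial F^{o}$.

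Next I would unpack the hypothesis $\hat{\gamma}_{F}(\xi,u(\xi))>0$. By the very definition of the $\liminf$ defining $\hat{\gamma}_{F}(\xi,u(\xi))$ there are $\gamma_{0}>0$ and (after possibly shrinking $\varepsilon$ once more) $\varepsilon>0$ with
\[
\widehat{\mathfrak{C}}_{F}\left(r,\eta,u(\eta)\right)\ge\gamma_{0}\,r^{2}\qquad\text{whenever }0<r<\varepsilon,\ \eta\in\partial F,\ \|\eta-\xi\|<\varepsilon .
\]
The point is to promote this to $\widehat{\mathfrak{C}}_{F}\left(r,\eta,u(\eta)\right)>0$ for \emph{every} $r>0$, which is exactly the hypothesis Proposition \ref{single} asks for. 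For $0<r<\varepsilon$ it is contained in the displayed bound. For $r\ge\varepsilon$ I would use that $r\mapsto\widehat{\mathfrak{C}}_{F}\left(r,\eta,u(\eta)\right)$ is non-decreasing — immediate from Definition \ref{d1}, where the infimum is taken over the sets $\{y\in F\cap P(\eta,u(\eta)):\|\eta-y\|\ge r\}$, which shrink as $r$ grows — so that $\widehat{\mathfrak{C}}_{F}\left(r,\eta,u(\eta)\right)\ge\widehat{\mathfrak{C}}_{F}\left(\varepsilon/2,\eta,u(\eta)\right)\ge\gamma_{0}(\varepsilon/2)^{2}>0$. Hence $\widehat{\mathfrak{C}}_{F}\left(r,\eta,u(\eta)\right)>0$ for all $r>0$.

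Feeding this into Proposition \ref{single} at $\eta$ (with $\eta^{\ast}$ and $u(\eta)$) gives $\mathfrak{J}_{F}(\eta^{\ast})\cap P(\eta,u(\eta))=\{\eta\}$ for every $\eta\in\partial F$ with $\|\eta-\xi\|<\varepsilon$, which is the assertion. The single delicate step is the promotion above — from the second-order behaviour as $r\to0^{+}$, which is all that $\hat{\gamma}_{F}>0$ controls, to strict positivity of $\widehat{\mathfrak{C}}_{F}(\cdot,\eta,u(\eta))$ at every $r$; equivalently, if $\langle\cdot,\eta^{\ast}\rangle$ took the value $1$ at some point of $F\cap P(\eta,u(\eta))$ different from $\eta$, then, being affine and $\le 1$ on $F$, it would equal $1$ along the whole segment joining that point to $\eta$, forcing $\widehat{\mathfrak{C}}_{F}\left(r,\eta,u(\eta)\right)=0$ for arbitrarily small $r$ and contradicting the displayed lower bound. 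Everything else is the bookkeeping already arranged in the paragraphs preceding the statement.
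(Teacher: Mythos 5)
Your proof is correct and follows essentially the same route as the paper: unpack $\hat{\gamma}_{F}(\xi,u(\xi))>0$ into a uniform bound $\widehat{\mathfrak{C}}_{F}(r,\eta,u(\eta))\ge\gamma_{0}r^{2}$ for small $r$ and $\eta$ near $\xi$, extend it to all $r>0$ via the monotonicity of $r\mapsto\widehat{\mathfrak{C}}_{F}(r,\eta,u(\eta))$, and then invoke Proposition \ref{single} at $\eta$. The closing remark re-deriving Proposition \ref{single} via the affine functional is redundant but harmless.
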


\begin{proof}
The condition $\hat{\gamma}_{F}\left( \xi ,u\left( \xi \right) \right) >0$
means that for some $\theta >0$ and $\rho >0$ the inequality%
\begin{equation}
\widehat{\mathfrak{C}}_{F}\left( r,\eta ,u\left( \eta \right) \right) \geq
\theta r^{2}  \label{17}
\end{equation}%
takes place whenever $\left\Vert \xi -\eta \right\Vert \leq \rho $, $\eta
\in \partial F$ and $0<r\leq \rho $. Thanks to the monotony of the
function $r\mapsto \widehat{\mathfrak{C}}_{F}\left( r,\eta ,u\left( \eta
\right) \right) $, decreasing if necessary the constant $\theta >0,$ we can
assume that (\ref{17}) is valid for all positive $r$. In fact, $\widehat{%
\mathfrak{C}}_{F}\left( r,\eta ,u\left( \eta \right) \right) =+\infty $
whenever $r>2\left\Vert F\right\Vert $ and for $\rho \leq r\leq 2\left\Vert
F\right\Vert $ we have 
\begin{equation*}
\widehat{\mathfrak{C}}_{F}\left( r,\eta ,u\left( \eta \right) \right) \geq 
\widehat{\mathfrak{C}}_{F}\left( \rho ,\eta ,u\left( \eta \right) \right)
\geq \theta \left( \frac{\rho }{r}\right) ^{2}r^{2}\geq \theta \left( \frac{%
\rho }{2\left\Vert F\right\Vert }\right) ^{2}r^{2}.
\end{equation*}%
Hence, $\widehat{\mathfrak{C}}_{F}\left( r,\eta ,u\left( \eta \right)
\right) >0$, for all $r>0$, and the conclusion follows from Proposition \ref%
{single}.
\end{proof}

\bigskip

In the next proposition $\hat{\gamma}_{F}\left( \xi \right) $ represents $%
\hat{\gamma}_{F}\left( \xi ,\xi ^{\ast }\right) $, given by (\ref{gama}),
for the unique $\xi ^{\ast }=\frac{1}{\left\langle \xi ,\nabla f\left( \xi
\right) \right\rangle }\nabla f\left( \xi \right) .$

\begin{proposition}
\label{curvaturas}We have%
\begin{equation}
\hat{\gamma}_{F}\left( \xi ,u\left( \xi \right) \right) \geq \hat{\gamma}%
_{F}\left( \xi \right) ,\ \ \ \ \ \forall u\left( \xi \right) \in \mathbf{T}%
_{F}\left( \xi \right) \backslash \left\{ \mathbf{0}\right\} .  \label{13}
\end{equation}%
Furthermore, if $\hat{\gamma}_{F}\left( \xi ,u\left( \xi \right) \right) =0$
for some $u\left( \xi \right) \in \mathbf{T}_{F}\left( \xi \right)
\backslash \left\{ \mathbf{0}\right\}$, we have $\hat{\gamma}_{F}\left( \xi
\right) =0$ too.
\end{proposition}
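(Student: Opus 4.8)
The plan is to compare, pointwise at boundary points near $\xi$, the directional modulus of Definition~\ref{d1} with the (non-directional) modulus (\ref{Def3.2}) of \cite{GP1}, and then to pass to the limit inferior. First I would fix $\eta\in\partial F\cap(\xi+\delta^{\prime}\mathbf{B})$. By Proposition~\ref{ConeN} and (\ref{R2}), the unique element of $\mathfrak{J}_{F}^{-1}\left(\eta\right)\cap\partial F^{o}$ is $\eta^{\ast}=\frac{1}{\left\langle\eta,\nabla f\left(\eta\right)\right\rangle}\nabla f\left(\eta\right)$, and $\left(\eta,\eta^{\ast}\right)$ is a dual pair, so $\left\langle\eta-\zeta,\eta^{\ast}\right\rangle\geq 0$ for every $\zeta\in F$ and in particular $\widehat{\mathfrak{C}}_{F}\left(r,\eta,\eta^{\ast}\right)\geq 0$ for all $r>0$. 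The vector $u\left(\eta\right)$ introduced just above the proposition lies in $\mathbf{T}_{F}\left(\eta\right)\backslash\left\{\mathbf{0}\right\}$ (its $j$th coordinates, $j\in I\backslash\left\{i\right\}$, are the same numbers $\alpha_{j}$ as for $u\left(\xi\right)$, not all zero), hence $P\left(\eta,u\left(\eta\right)\right)$ is a genuine plane and $F\cap P\left(\eta,u\left(\eta\right)\right)\subset F$. Therefore the infimum defining $\widehat{\mathfrak{C}}_{F}\left(r,\eta,u\left(\eta\right)\right)$ in Definition~\ref{d1} is taken over a subset of the feasible set of the infimum (\ref{Def3.2}) that defines $\widehat{\mathfrak{C}}_{F}\left(r,\eta,\eta^{\ast}\right)$, with the same linear form $\eta^{\ast}$, and so
\begin{equation*}
\widehat{\mathfrak{C}}_{F}\left(r,\eta,u\left(\eta\right)\right)\geq\widehat{\mathfrak{C}}_{F}\left(r,\eta,\eta^{\ast}\right),\qquad r>0.
\end{equation*}

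Next I would divide by $r^{2}$ and pass to $\liminf$ as $\left(r,\eta\right)\rightarrow\left(0^{+},\xi\right)$ with $\eta\in\partial F$. The left-hand side gives exactly $\hat{\gamma}_{F}\left(\xi,u\left(\xi\right)\right)$, by the definition recalled just before the proposition. For the right-hand side, note that by (\ref{R2}), Remark~\ref{r1} and the $\mathcal{C}^{2}$-regularity of $f$ the assignment $\eta\mapsto\eta^{\ast}$ is continuous on $\partial F\cap(\xi+\delta^{\prime}\mathbf{B})$ and maps $\xi$ to $\xi^{\ast}$; hence any sequences $\eta_{k}\rightarrow\xi$ in $\partial F$ and $r_{k}\rightarrow 0^{+}$ produce a sequence of triples $\left(r_{k},\eta_{k},\eta_{k}^{\ast}\right)\rightarrow\left(0^{+},\xi,\xi^{\ast}\right)$ admissible in (\ref{gama}), so $\liminf_{k}\widehat{\mathfrak{C}}_{F}\left(r_{k},\eta_{k},\eta_{k}^{\ast}\right)/r_{k}^{2}\geq\hat{\gamma}_{F}\left(\xi,\xi^{\ast}\right)$. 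Taking the infimum over all such sequences,
\begin{equation*}
\liminf_{\substack{\left(r,\eta\right)\rightarrow\left(0^{+},\xi\right)\\ \eta\in\partial F}}\frac{\widehat{\mathfrak{C}}_{F}\left(r,\eta,\eta^{\ast}\right)}{r^{2}}\geq\hat{\gamma}_{F}\left(\xi,\xi^{\ast}\right)=\hat{\gamma}_{F}\left(\xi\right),
\end{equation*}
and the two displays above together give (\ref{13}). (In fact equality holds in the last display, because by Proposition~\ref{ConeN} every triple admissible in (\ref{gama}) that is close enough to $\left(0^{+},\xi,\xi^{\ast}\right)$ already has $\eta^{\ast}$ of the form (\ref{R2}); but this refinement is not needed.) Finally, if $\hat{\gamma}_{F}\left(\xi,u\left(\xi\right)\right)=0$ for some $u\left(\xi\right)\in\mathbf{T}_{F}\left(\xi\right)\backslash\left\{\mathbf{0}\right\}$, then (\ref{13}) forces $\hat{\gamma}_{F}\left(\xi\right)\leq 0$, whereas $\widehat{\mathfrak{C}}_{F}\left(r,\eta,\eta^{\ast}\right)\geq 0$ (shown above) forces $\hat{\gamma}_{F}\left(\xi\right)\geq 0$; hence $\hat{\gamma}_{F}\left(\xi\right)=0$.

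I expect the main obstacle to be this middle step, namely checking carefully that the $\liminf$ over pairs $\left(r,\eta\right)$ — with the dual vector $\eta^{\ast}$ enslaved to $\eta$ through (\ref{R2}) — dominates the $\liminf$ of (\ref{gama}), which a priori runs over seemingly unrelated triples $\left(r,\eta,\eta^{\ast}\right)$. This relies on the uniqueness of the normal direction near $\xi$ furnished by Proposition~\ref{ConeN}, together with the continuity of $\eta\mapsto\eta^{\ast}$ coming from (\ref{R2}) and Remark~\ref{r1}; the remaining steps are routine monotonicity-of-infimum arguments.
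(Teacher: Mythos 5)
Your proposal is correct and follows essentially the same route as the paper: the pointwise domination $\widehat{\mathfrak{C}}_{F}\left(r,\eta,u\left(\eta\right)\right)\geq\widehat{\mathfrak{C}}_{F}\left(r,\eta,\eta^{\ast}\right)$ (infimum over the smaller set $F\cap P\left(\eta,u\left(\eta\right)\right)$), followed by the observation that the $\liminf$ over pairs $\left(r,\eta\right)$ with $\eta^{\ast}$ enslaved to $\eta$ via (\ref{R2}) dominates the $\liminf$ over all triples in (\ref{gama}), and nonnegativity of $\hat{\gamma}_{F}\left(\xi\right)$ for the final claim. You merely make explicit the continuity and dual-pair justifications that the paper leaves implicit.
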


\begin{proof}
In fact, by (\ref{Def3.2}),%
\begin{eqnarray}
\underset{_{\substack{ \left( r,\eta \right) \rightarrow \left( 0^{+},\xi
\right)  \\ \eta \in \partial F}}}{\lim \inf }\frac{\widehat{\mathfrak{C}}%
_{F}\left( r,\eta ,u\left( \eta \right) \right) }{r^{2}} &=&\underset{ 
_{\substack{ \left( r,\eta \right) \rightarrow \left( 0^{+},\xi \right)  \\ %
\eta \in \partial F, \ \eta ^{\ast }=\frac{1}{\left\langle \eta ,\nabla f\left(
\eta \right) \right\rangle }\nabla f\left( \eta \right) }}}{\lim \inf }\frac{%
\widehat{\mathfrak{C}}_{F}\left( r,\eta ,u\left( \eta \right) \right) }{r^{2}%
}  \notag \\
&\geq &\underset{_{\substack{ \left( r,\eta \right) \rightarrow \left(
0^{+},\xi \right)  \\ \eta \in \partial F, \ \eta ^{\ast }=\frac{1}{%
\left\langle \eta ,\nabla f\left( \eta \right) \right\rangle }\nabla f\left(
\eta \right) }}}{\lim \inf }\frac{\widehat{\mathfrak{C}}_{F}\left( r,\eta
,\eta ^{\ast }\right) }{r^{2}}  \label{16} \\
&\geq &\underset{_{\substack{ \left( r,\eta ,\eta ^{\ast }\right)
\rightarrow \left( 0^{+},\xi ,\xi ^{\ast }\right)  \\ \eta \in \mathfrak{J}%
_{F}\left( \eta ^{\ast }\right), \ \eta ^{\ast }\in \partial F^{o}}}}{\lim
\inf }\frac{\widehat{\mathfrak{C}}_{F}\left( r,\eta ,\eta ^{\ast }\right) }{%
r^{2}},  \label{2}
\end{eqnarray}%
which implies (\ref{13}).

Now, recalling that we always have $\hat{\gamma}_{F}\left( \xi \right) \geq
0 $, if there is $u\left( \xi \right) \in \mathbf{T}_{F}\left( \xi \right)
\backslash \left\{ \mathbf{0}\right\} $ such that $\hat{\gamma}_{F}\left(
\xi ,u\left( \xi \right) \right) =0$, we will have $\hat{\gamma}_{F}\left(
\xi \right) =0.$
\end{proof}

\bigskip

When $n=2$, we have $\mathbf{T}_{F}\left( \xi \right) =\mbox{span}\left\{
t_{F}\left( \xi \right) \right\} =\left\{ \lambda t_{F}\left( \xi \right)
:\lambda \in \mathbb{R}\right\}$, for $\ t_{F}\left( \xi \right) :=\left(
-f_{2}\left( \xi \right) ,f_{1}\left( \xi \right) \right) $, and the plane $%
P\left( \xi ,\lambda t_{F}\left( \xi \right) \right) $ coincide with $%
\mathbb{R}^{2}$ for every $\lambda \in \mathbb{R}\backslash \left\{
0\right\} $.

\begin{proposition}
\label{n2}For $n=2$ the equality holds at (\ref{13}) if $\ \hat{\gamma}%
_{F}\left( \xi ,\lambda t_{F}\left( \xi \right) \right) >0$, for some $%
\lambda \in \mathbb{R}\backslash \left\{ 0\right\} $.
\end{proposition}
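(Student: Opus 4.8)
The plan is to prove the reverse of the inequality (\ref{13}) in this two-dimensional situation, the direct inequality being already supplied by Proposition \ref{curvaturas}; the two together give $\hat{\gamma}_{F}\left( \xi ,\lambda t_{F}\left( \xi \right) \right) =\hat{\gamma}_{F}\left( \xi \right) $, i.e. equality at (\ref{13}). The first observation is that $\nabla f\left( \eta \right) $ and $u\left( \eta \right) :=\lambda t_{F}\left( \eta \right) $ are linearly independent whenever $\eta \in \xi +\delta ^{\prime }\mathbf{B}$ and $\lambda \neq 0$ (as already noted when $P\left( \eta ,u\left( \eta \right) \right) $ was introduced), so for $n=2$ one has $P\left( \eta ,u\left( \eta \right) \right) =\mathbb{R}^{2}$; the plane section of $F$ is then all of $F$ and the particular value of $\lambda $ is irrelevant. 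Writing $\eta ^{\ast }$ for the vector (\ref{R2}) associated with $\eta $, Remark \ref{modulo} together with (\ref{Def3.2}) gives $\widehat{\mathfrak{C}}_{F}\left( r,\eta ,u\left( \eta \right) \right) =\widehat{\mathfrak{C}}_{F}\left( r,\eta ,\eta ^{\ast }\right) $ for all $r>0$, hence, by Definition \ref{def},
\begin{equation*}
\hat{\gamma}_{F}\left( \xi ,\lambda t_{F}\left( \xi \right) \right) =\underset{_{\substack{ \left( r,\eta \right) \rightarrow \left( 0^{+},\xi \right)  \\ \eta \in \partial F}}}{\lim \inf }\frac{\widehat{\mathfrak{C}}_{F}\left( r,\eta ,\eta ^{\ast }\right) }{r^{2}}.
\end{equation*}

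The second step is to recognise the right-hand side as the $\lim \inf $ defining $\hat{\gamma}_{F}\left( \xi \right) $ in (\ref{gama}). By the consequence of Proposition \ref{ConeN} recorded just after its proof, for every $\eta \in \partial F$ close enough to $\xi $ the preimage $\mathfrak{J}_{F}^{-1}\left( \eta \right) $ is the singleton $\left\{ \nabla f\left( \eta \right) /\left\langle \eta ,\nabla f\left( \eta \right) \right\rangle \right\} $; equivalently, a triple $\left( r,\eta ,\eta ^{\ast }\right) $ with $r>0$, $\eta \in \partial F$ near $\xi $, $\eta ^{\ast }\in \partial F^{o}$ and $\eta \in \mathfrak{J}_{F}\left( \eta ^{\ast }\right) $ is already determined by the pair $\left( r,\eta \right) $, its last coordinate being forced to equal that vector. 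Since the $\lim \inf $ in (\ref{gama}) only involves $\eta \rightarrow \xi $, these are the only triples that contribute to it; conversely, every pair $\left( r,\eta \right) $ with $\eta \in \partial F$ near $\xi $ produces, through $\eta \mapsto \eta ^{\ast }$, an admissible triple converging to $\left( 0^{+},\xi ,\xi ^{\ast }\right) $, the convergence $\eta ^{\ast }\rightarrow \xi ^{\ast }$ being automatic from the continuity of $\nabla f$ used in Remark \ref{r1}. As $\widehat{\mathfrak{C}}_{F}\left( r,\eta ,\eta ^{\ast }\right) /r^{2}$ is the same function on the two index sets, the two $\lim \inf $'s coincide, which gives $\hat{\gamma}_{F}\left( \xi ,\lambda t_{F}\left( \xi \right) \right) =\hat{\gamma}_{F}\left( \xi \right) $; multiplying by $1/\left\Vert \xi ^{\ast }\right\Vert $ yields the corresponding equality for the curvatures. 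The hypothesis $\hat{\gamma}_{F}\left( \xi ,\lambda t_{F}\left( \xi \right) \right) >0$ enters through the monotonicity argument from the proof of the proposition immediately preceding Proposition \ref{curvaturas}, combined with Proposition \ref{single}: it ensures $\widehat{\mathfrak{C}}_{F}\left( r,\eta ,u\left( \eta \right) \right) >0$ for all $r>0$, hence that each $\eta $ near $\xi $ is exposed by $\eta ^{\ast }$, which is what makes the matching of dual pairs above legitimate and rules out degenerate sections.

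The remaining verifications are routine: the linear-independence claim is immediate in $\mathbb{R}^{2}$, since $u\left( \eta \right) \in \mathbf{T}_{F}\left( \eta \right) \backslash \left\{ \mathbf{0}\right\} $ while $\nabla f\left( \eta \right) \notin \mathbf{T}_{F}\left( \eta \right) $ (because $\nabla f\left( \eta \right) \neq \mathbf{0}$ by Remark \ref{r1}); and one has to check the elementary bookkeeping that the projection $\left( r,\eta ,\eta ^{\ast }\right) \mapsto \left( r,\eta \right) $ maps a neighbourhood basis of $\left( 0^{+},\xi ,\xi ^{\ast }\right) $ in the triple-index set onto a neighbourhood basis of $\left( 0^{+},\xi \right) $ in the pair-index set, the injectivity being precisely the singleton property of $\mathfrak{J}_{F}^{-1}$ near $\xi $ and the surjectivity coming from the continuity in Remark \ref{r1}. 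I expect this last bookkeeping --- making precise that the two $\lim \inf $'s are genuinely taken over the same data near the base point --- to be the only slightly delicate point; everything else follows directly from the material of Sections \ref{Exemplo} and \ref{Directional}.
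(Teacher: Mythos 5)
Your proposal is correct and follows essentially the same route as the paper's proof: equality in (\ref{16}) comes from $P\left( \eta ,u\left( \eta \right) \right) =\mathbb{R}^{2}$ forcing $\widehat{\mathfrak{C}}_{F}\left( r,\eta ,u\left( \eta \right) \right) =\widehat{\mathfrak{C}}_{F}\left( r,\eta ,\eta ^{\ast }\right) $, and equality in (\ref{2}) is secured by the positivity hypothesis through the monotonicity argument and Proposition \ref{single}, together with the singleton property of $\mathfrak{J}_{F}^{-1}$ near $\xi $. Your discussion of matching the pair-indexed and triple-indexed $\lim\inf$'s is merely a more explicit rendering of the step the paper states tersely as ``we obtain an equality at (\ref{2})''.
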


\begin{proof}
Fixed $\eta \in \left( \xi +\delta ^{\prime }\mathbf{B}\right) \cap \partial
F$ and $u\left( \eta \right) \in \mathbf{T}_{F}\left( \eta \right)
\backslash \left\{ \mathbf{0}\right\} $, as defined above, we have $P\left(
\eta ,u\left( \eta \right) \right) =\mathbb{R}^{2}$, which implies that 
\begin{equation*}
\widehat{\mathfrak{C}}_{F}\left( r,\eta ,u\left( \eta \right) \right) =%
\widehat{\mathfrak{C}}_{F}\left( r,\eta ,\eta ^{\ast }\right) \text{,\ \ \ }%
\forall r>0
\end{equation*}%
(see Definition \ref{d1} and (\ref{Def3.2})). Therefore there is an equality
in (\ref{16}).

If $\lambda \in \mathbb{R}\backslash \left\{ 0\right\} $ is such that $\hat{%
\gamma}_{F}\left( \xi ,\lambda t_{F}\left( \xi \right) \right) >0$, then
there exist $\theta >0$ and $\varepsilon >0$ such that%
\begin{equation*}
\widehat{\mathfrak{C}}_{F}\left( r,\eta ,\lambda t_{F}\left( \eta \right)
\right) \geq \theta r^{2},
\end{equation*}%
whenever\ $\left\Vert \eta -\xi \right\Vert \leq \varepsilon $, $\eta \in
\partial F$ and $0<r<\varepsilon .$ Now, by Proposition \ref{single}, for
such $\eta $ and respectives $t_{F}\left( \eta \right) $ and $\eta ^{\ast }$, we have $\ \mathfrak{J}_{F}\left( \eta ^{\ast }\right) =\left\{ \eta
\right\} $. Therefore, we obtain an equality at (\ref{2}), and consequently%
\begin{equation}
\hat{\gamma}_{F}\left( \xi ,\lambda t_{F}\left( \xi \right) \right) =\hat{%
\gamma}_{F}\left( \xi \right) .  \label{18}
\end{equation}
\end{proof}

\bigskip

The last proposition together with Proposition \ref{curvaturas} imply that $%
\hat{\gamma}_{F}\left( \xi \right) =\hat{\gamma}_{F}\left( \xi ,u\left( \xi
\right) \right) ,$ for every $u\left( \xi \right) \in \mathbf{T}_{F}\left(
\xi \right) .$ The last conclusion was already expected, since in $\mathbb{R}%
^{2}$ there is only one tangent direction.

\section{The main result}

In this section we prove that, under our conditions, the directional
curvature $\hat{\varkappa}_{F}\left( \xi ,u^{j}\left( \xi \right) \right) ,$ 
$j\in I\backslash \left\{ i\right\} $, can be calculated very easily. For
this we need to compute the second derivative of $f$ at $\xi ,$ $\nabla
^{2}f\left( \xi \right) $, given, as usual, by the $n\times n$ matrix with $%
\frac{\partial ^{2}f}{\partial x_{r}\partial x_{s}}\left( \xi \right) $ at
the row $r$ and column $s$, for every $r,s\in \left\{ 1,...,n\right\} .$

\begin{theorem}
\label{Teo1}Let a compact convex set $F\subset \mathbb{R}^{n}$, $n\geq 2$,
with $\mathbf{0}\in \mathbb{R}^{n}$ in its interior, and a point $\xi \in
\partial F.$ Assume that there are $\delta >0$ and $f:\mathbb{R}%
^{n}\rightarrow \mathbb{R}$ of class $\mathcal{C}^{2}$ at $\xi +\delta 
\mathbf{B}$, such that 
\begin{equation*}
F\subset \left\{ x\in \mathbb{R}^{n}:f\left( x\right) \leq 0\right\} ,
\end{equation*}%
\begin{equation*}
\left\langle \xi ,\nabla f\left( \xi \right) \right\rangle >0,
\end{equation*}%
and such that, for $x\in \xi +\delta \mathbf{B}$, we have $%
x\in \partial F$ if and only if $f\left( x\right) =0.$ Then we have%
\begin{equation}
\hat{\gamma}_{F}\left( \xi ,u^{j}\left( \xi \right) \right) =\frac{1}{%
2\left\langle \xi ,\nabla f\left( \xi \right) \right\rangle \left\Vert
u^{j}\left( \xi \right) \right\Vert ^{2}}\left\langle \nabla ^{2}f\left( \xi
\right) u^{j}\left( \xi \right) ,u^{j}\left( \xi \right) \right\rangle ,%
\text{ \ \ \ }j\in I\backslash \left\{ i\right\} .  \label{1}
\end{equation}
\end{theorem}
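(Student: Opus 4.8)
The plan is to compute $\widehat{\mathfrak{C}}_F(r,\eta,u^j(\eta))$ directly from its definition by parametrizing the intersection curve $\partial F\cap P(\eta,u^j(\eta))$ near $\eta$, then divide by $r^2$ and pass to the limit. The key simplification is that everything happens in the $2$-plane $P(\eta,u^j(\eta))=\operatorname{span}\{\nabla f(\eta),u^j(\eta)\}+\eta$, on which $\partial F$ restricts to a plane convex curve, and by Remark \ref{modulo} the infimum defining $\widehat{\mathfrak{C}}_F$ is attained on $\partial F\cap P$ at points at distance exactly $r$ from $\eta$.

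First I would set up coordinates in $P(\eta,u^j(\eta))$: write a point of the plane as $\eta + s\,\tfrac{u^j(\eta)}{\|u^j(\eta)\|} + t\,\tfrac{\nabla f(\eta)}{\|\nabla f(\eta)\|}$. Since $\nabla f(\eta)\neq\mathbf 0$ (Remark \ref{r1}) and $f$ is $\mathcal C^2$, the implicit function theorem applied to the restriction of $f$ to this plane gives a $\mathcal C^2$ function $t=\varphi(s)$ with $\varphi(0)=0$ describing $\partial F\cap P$ near $\eta$; moreover $\varphi'(0)=0$ because $u^j(\eta)\in\mathbf T_F(\eta)$ is tangent, i.e. $\langle u^j(\eta),\nabla f(\eta)\rangle=0$. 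A standard Taylor expansion of $f$ along the curve then yields $\varphi''(0)$ in terms of $\nabla^2 f(\eta)$: differentiating $f\big(\eta + s\hat u + \varphi(s)\hat g\big)=0$ twice at $s=0$ and using $\varphi'(0)=0$ gives
\begin{equation*}
\varphi''(0) = -\,\frac{\langle \nabla^2 f(\eta)\,u^j(\eta),u^j(\eta)\rangle}{\|u^j(\eta)\|^2\,\|\nabla f(\eta)\|}.
\end{equation*}

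Next I would translate the geometric quantity $\langle \eta-y,\eta^*\rangle$, for $y$ on the curve at distance $r$ from $\eta$, into these coordinates. Using $\eta^* = \tfrac{1}{\langle\eta,\nabla f(\eta)\rangle}\nabla f(\eta)$ from \eqref{R2}, we get $\langle \eta - y,\eta^*\rangle = -\tfrac{\|\nabla f(\eta)\|}{\langle\eta,\nabla f(\eta)\rangle}\,\varphi(s)$, where $y$ corresponds to parameter $s$; and the constraint $\|\eta-y\|=r$ reads $s^2+\varphi(s)^2=r^2$, so $s = \pm r + o(r)$. Since $F\subset\{f\le0\}$ forces the curve to curve toward the $-\nabla f$ side, $\varphi(s)\le 0$ near $0$, making $\langle\eta-y,\eta^*\rangle\ge 0$ as it should. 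Plugging $\varphi(s)\approx\tfrac12\varphi''(0)s^2\approx\tfrac12\varphi''(0)r^2$ gives
\begin{equation*}
\frac{\widehat{\mathfrak{C}}_F(r,\eta,u^j(\eta))}{r^2} \;\longrightarrow\; -\,\frac{\|\nabla f(\eta)\|}{\langle\eta,\nabla f(\eta)\rangle}\cdot\frac{\varphi''(0)}{2} \;=\; \frac{\langle \nabla^2 f(\eta)\,u^j(\eta),u^j(\eta)\rangle}{2\,\langle\eta,\nabla f(\eta)\rangle\,\|u^j(\eta)\|^2},
\end{equation*}
and since all data ($\nabla f$, $\nabla^2 f$, $u^j$) depend continuously on $\eta$, the $\liminf$ as $(r,\eta)\to(0^+,\xi)$ along $\eta\in\partial F$ is just the value at $\xi$, which is \eqref{1}.

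The main obstacle, and the step that needs the most care, is justifying the interchange of the two limits — that the $\liminf$ over $(r,\eta)$ jointly equals the pointwise limit at $\xi$. This requires uniformity: the implicit function $\varphi=\varphi_\eta$ and its error terms in the Taylor expansion must be controlled uniformly for $\eta$ in a neighborhood of $\xi$. This follows from the $\mathcal C^2$ regularity of $f$ on the fixed ball $\xi+\delta\mathbf B$ together with the uniform lower bound $\inf_\eta\langle\eta,\nabla f(\eta)\rangle>0$ from \eqref{5'} (which keeps denominators away from zero and the domain of $\varphi_\eta$ uniformly large), but writing the estimates cleanly is where the real work lies. A secondary subtlety is checking the sign/branch issue — that the infimum in $\widehat{\mathfrak{C}}_F$ over the two branches $s\approx\pm r$ produces the stated quantity and not something smaller — which is handled by noting both branches give the same leading term $\tfrac12\varphi''(0)r^2$.
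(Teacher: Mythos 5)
Your proposal is correct in outline and reaches the right formula, but it organizes the second-order expansion differently from the paper. You parametrize $\partial F\cap P(\eta,u^j(\eta))$ as a graph $t=\varphi_\eta(s)$ over the tangent direction inside the $2$-plane, compute $\varphi_\eta''(0)=-\langle\nabla^2f(\eta)u^j(\eta),u^j(\eta)\rangle/(\|u^j(\eta)\|^2\|\nabla f(\eta)\|)$ by implicit differentiation, and then convert $\langle\eta-y,\eta^\ast\rangle$ into $-\varphi_\eta(s)\,\|\nabla f(\eta)\|/\langle\eta,\nabla f(\eta)\rangle$ with $s=\pm r+O(r^3)$. The paper instead never parametrizes the section curve: it applies Taylor's formula with integral remainder to $f$ along the chord $v=(y-\eta)/\|y-\eta\|$, obtaining $\langle\eta-y,\nabla f(\eta)\rangle=\tfrac{r^2}{2}\langle\nabla^2f(\eta+\tau v)v,v\rangle$ via the mean value theorem for integrals, and then replaces the chord direction $v$ by $u^j(\xi)/\|u^j(\xi)\|$ using the auxiliary estimate (\ref{8}) (whose justification is where the paper's implicit function $g$ enters, as a graph over the hyperplane $x_i=\mathrm{const}$ rather than over the tangent line in the $2$-plane). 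The two routes are close in spirit — both are second-order Taylor expansions of $f$ restricted to the planar section — but yours localizes the implicit function theorem to the $2$-plane, which makes the leading coefficient appear immediately as a curvature of a plane graph, while the paper's chord expansion avoids discussing the domain and regularity of a family of graph functions $\varphi_\eta$ at the cost of the extra chord-versus-tangent estimate. The one place where your write-up is genuinely lighter than the actual work is the uniformity step you flag at the end: the paper devotes the conditions (\ref{R4}), (\ref{6}) and (\ref{8}), all holding on a single ball $\xi+\overline{\delta}\mathbf{B}$ chosen via (\ref{5'}) and the continuity of $\nabla f$ and $\nabla^2 f$, precisely to making the joint $\liminf$ over $(r,\eta)\to(0^+,\xi)$ collapse to the value at $\xi$; in your version the analogous burden is a uniform (in $\eta$) modulus for the second-order Taylor remainder of $\varphi_\eta$ and a uniform lower bound on the size of its domain, which does follow from $\mathcal{C}^2$ regularity on the fixed ball and (\ref{5'}) but still has to be written out for both the lower and the upper inequality in (\ref{1}), since the definition of $\hat{\gamma}_F$ is a $\liminf$ and the two bounds are obtained from it in slightly different ways.
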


\begin{proof}
	By hypothesis $\left\langle \xi ,\nabla f\left( \xi \right) \right\rangle >0$, so let us fix the first $i\in I$ such that $f_{x_{i}}\left( \xi \right)\neq 0$ (see Remark \ref{r1}). For any $\eta \in \mathbb{R}^{n}$, let $\eta^{i}\in \mathbb{R}^{n-1}$ the vector $\eta $ without the $i$th coordinate. Thanks to the Implicit Function Theorem there are a neighbourhood $U:=\xi^{i}+\delta _{1}\mathbf{B}\subset \mathbb{R}^{n-1}$, $0<\delta_{1}\leq\delta$, and a $\mathcal{C}^{2}$ function $g:U\rightarrow \mathbb{R}$ such that:
	\begin{enumerate}
		\item[(i)] $f\left( \eta ^{i},g\left( \eta ^{i}\right) \right) =0$, for any $\eta ^{i}\in U$,
		\item[(ii)] for $\eta ^{i}\in U$ such that $f\left( \eta \right) =0$ we have $\eta _{i}=g\left( \eta ^{i}\right)$, and 
		\item[(iii)] for any $\eta ^{i}\in U$ we have
		\begin{equation*}
			\frac{\partial g}{\partial x_{j}}\left( \eta ^{i}\right) =-\frac{\frac{\partial f}{\partial x_{j}}\left( \eta ^{i},g\left( \eta ^{i}\right) \right)}{\frac{\partial f}{\partial x_{i}}\left(\eta^{i},g\left( \eta ^{i}\right)\right) }\text{ \ \ }(j\in I\backslash\left\{ i\right\}),
		\end{equation*}
	where $\left( \eta ^{i},g\left( \eta ^{i}\right) \right) \in \mathbb{R}^{n}$ represents the vector $\eta $ with $g\left( \eta ^{i}\right) $ instead of $\eta _{i}$.
\end{enumerate}
Let $j\in I\backslash \left\{ i\right\} $. By (\ref{5'}) for each $\varepsilon >0$ there is $0<\overline{\delta }=\overline{\delta }\left(
\varepsilon \right) \leq \min \left\{ \delta ^{\prime },\delta _{1}\right\} $
such that%
\begin{equation}
\left\Vert \frac{1}{\left\langle \eta ,\nabla f\left( \eta \right)
\right\rangle }\nabla f\left( \eta \right) -\frac{1}{\left\langle \xi
,\nabla f\left( \xi \right) \right\rangle }\nabla f\left( \xi \right)
\right\Vert <\varepsilon  \label{R4}
\end{equation}%
holds for any $\eta \in \xi +\overline{\delta }\mathbf{B}$ (by the
continuity of $\nabla f\left( \cdot \right) $ at $\xi $),%
\begin{equation}
\left\vert \frac{\left\langle \nabla ^{2}f\left( \zeta \right)
v,v\right\rangle }{\left\langle \eta ,\nabla f\left( \eta \right)
\right\rangle }-\frac{\left\langle \nabla ^{2}f\left( \xi \right)
v,v\right\rangle }{\left\langle \xi ,\nabla f\left( \xi \right)
\right\rangle }\right\vert <\frac{\varepsilon }{2},  \label{6}
\end{equation}%
for any $\eta ,\zeta \in \xi +\overline{\delta }\mathbf{B}$ and any $v\in 
\mathbb{R}^{n}$, $\left\Vert v\right\Vert =1$ (by the continuity of $\nabla
^{2}f\left( \cdot \right) $ at $\xi $), and such that 
\begin{equation}
\left\vert \left\langle \nabla ^{2}f\left( \xi \right) \frac{y-\eta }{%
\left\Vert y-\eta \right\Vert },\frac{y-\eta }{\left\Vert y-\eta \right\Vert 
}\right\rangle -\left\langle \nabla ^{2}f\left( \xi \right) \frac{%
u^{j}\left( \xi \right) }{\left\Vert u^{j}\left( \xi \right) \right\Vert },%
\frac{u^{j}\left( \xi \right) }{\left\Vert u^{j}\left( \xi \right)
\right\Vert }\right\rangle \right\vert <\left\langle \xi ,\nabla f\left( \xi
\right) \right\rangle \varepsilon ,  \label{8}
\end{equation}%
holds for any $\eta ,y\in \xi +\overline{\delta }\mathbf{B}$, $y\in P\left(
\eta ,u^{j}\left( \eta \right) \right) $, $y\neq \eta $, with $f\left(
y\right) =f\left( \eta \right) =0$ (using the continuity of $\nabla g\left(
\cdot \right) $ and $\nabla f\left( \cdot \right) $ at $\xi ^{i}$ and $\xi $%
, respectively, and using the Lagrange Mean Value Theorem).

\medskip

Let us prove the inequality $"\geq "$ in (\ref{1}), assuming that $\hat{%
\gamma}_{F}\left( \xi ,u^{j}\left( \xi \right) \right) <+\infty $, because
in the other case there is nothing to prove.

Let us fix $\varepsilon >0$, the corresponding $\overline{\delta }>0$, $0<r<%
\frac{\overline{\delta }}{2}$ and $\eta \in \left( \xi +\frac{\overline{%
\delta }}{2}\mathbf{B}\right) \cap \partial F$. We want to prove that 
\begin{equation*}
\frac{\widehat{\mathfrak{C}}_{F}\left( r,\eta ,u^{j}\left( \eta \right)
\right) }{r^{2}}>\frac{1}{2\left\langle \xi ,\nabla f\left( \xi \right)
\right\rangle \left\Vert u^{j}\left( \xi \right) \right\Vert ^{2}}%
\left\langle \nabla ^{2}f\left( \xi \right) u^{j}\left( \xi \right)
,u^{j}\left( \xi \right) \right\rangle -\varepsilon .
\end{equation*}%
Remember that $\eta ^{\ast }=\frac{1}{\left\langle \eta ,\nabla f\left( \eta
\right) \right\rangle }\nabla f\left( \eta \right) $. By Remark \ref{modulo}
there is $y\in \partial F\cap P\left( \eta ,u^{j}\left( \eta \right) \right) 
$ with $\left\Vert \eta -y\right\Vert =r$, such that 
\begin{equation}
\widehat{\mathfrak{C}}_{F}\left( r,\eta ,u^{j}\left( \eta \right) \right)
>\left\langle \eta -y,\eta ^{\ast }\right\rangle -\frac{\varepsilon }{4}%
r^{2}.  \label{4}
\end{equation}%
Notice that $\left\Vert \eta -y\right\Vert =r>0$ implies $y\neq \eta $.
Putting%
\begin{equation}
v:=\frac{y-\eta }{\left\Vert y-\eta \right\Vert },  \label{9}
\end{equation}%
then $\eta +rv=y$. Thanks to the Taylor's formula (see, e.g., \cite[p.75]%
{Berger}) 
\begin{equation*}
f\left( \eta +rv\right) =f\left( \eta \right) +\left\langle rv,\nabla
f\left( \eta \right) \right\rangle +\int_{0}^{r}\left\langle \nabla
^{2}f\left( \eta +\tau v\right) v,v\right\rangle \left( r-\tau \right) d\tau,
\end{equation*}%
and by the definition of $v$ 
\begin{equation*}
f\left( y\right) =f\left( \eta \right) +\left\langle y-\eta ,\nabla f\left(
\eta \right) \right\rangle +\int_{0}^{r}\left\langle \nabla ^{2}f\left( \eta
+\tau v\right) v,v\right\rangle \left( r-\tau \right) d\tau .
\end{equation*}%
Hence, by using the Mean Value Theorem for integrals and remembering that $%
f\left( \eta \right) =f\left( y\right) =0$, we obtain%
\begin{eqnarray}
\left\langle \eta -y,\nabla f\left( \eta \right) \right\rangle
&=&\int_{0}^{r}\left\langle \nabla ^{2}f\left( \eta +\tau v\right)
v,v\right\rangle \left( r-\tau \right) d\tau  \notag \\
&=&\frac{r^{2}}{2}\left\langle \nabla ^{2}f\left( \eta +\tau v\right)
v,v\right\rangle ,  \label{3}
\end{eqnarray}%
for some $\tau =\tau \left( r,v\right) \in \left] 0,r\right[ $. Let us fix
such $\tau $. By (\ref{4}), (\ref{R2}) and (\ref{3}), respectively, we have%
\begin{eqnarray}
\widehat{\mathfrak{C}}_{F}\left( r,\eta ,u^{j}\left( \eta \right) \right)
&>&\left\langle \eta -y,\eta ^{\ast }\right\rangle -\frac{\varepsilon }{4}%
r^{2}  \notag \\
&=&\frac{1}{\left\langle \eta ,\nabla f\left( \eta \right) \right\rangle }%
\left\langle \eta -y,\nabla f\left( \eta \right) \right\rangle -\frac{%
\varepsilon }{4}r^{2}  \notag \\
&\geq &\frac{r^{2}}{2\left\langle \eta ,\nabla f\left( \eta \right)
\right\rangle }\left\langle \nabla ^{2}f\left( \eta +\tau v\right)
v,v\right\rangle -\frac{\varepsilon }{4}r^{2}.  \label{7}
\end{eqnarray}%
Since 
\begin{equation}
\left\Vert \eta +\tau v-\xi \right\Vert =\left\Vert \eta +\tau \frac{y-\eta 
}{\left\Vert y-\eta \right\Vert }-\xi \right\Vert \leq \left\Vert \eta -\xi
\right\Vert +\tau <\overline{\delta },  \label{10}
\end{equation}%
by (\ref{6}) we obtain%
\begin{equation*}
\left\vert \frac{\left\langle \nabla ^{2}f\left( \eta +\tau v\right)
v,v\right\rangle }{\left\langle \eta ,\nabla f\left( \eta \right)
\right\rangle }-\frac{\left\langle \nabla ^{2}f\left( \xi \right)
v,v\right\rangle }{\left\langle \xi ,\nabla f\left( \xi \right)
\right\rangle }\right\vert <\frac{\varepsilon }{2},
\end{equation*}%
and using (\ref{8}) and (\ref{7}) we conclude 
\begin{eqnarray*}
\frac{\widehat{\mathfrak{C}}_{F}\left( r,\eta ,u^{j}\left( \eta \right)
\right) }{r^{2}} &>&\frac{1}{2\left\langle \eta ,\nabla f\left( \eta \right)
\right\rangle }\left\langle \nabla ^{2}f\left( \eta +\tau v\right)
v,v\right\rangle -\frac{\varepsilon }{4} \\
&>&\frac{1}{2\left\langle \xi ,\nabla f\left( \xi \right) \right\rangle }%
\left\langle \nabla ^{2}f\left( \xi \right) v,v\right\rangle -\frac{%
\varepsilon }{2} \\
&>&\frac{1}{2\left\langle \xi ,\nabla f\left( \xi \right) \right\rangle }%
\left\langle \nabla ^{2}f\left( \xi \right) \frac{u^{j}\left( \xi \right) }{%
\left\Vert u^{j}\left( \xi \right) \right\Vert },\frac{u^{j}\left( \xi
\right) }{\left\Vert u^{j}\left( \xi \right) \right\Vert }\right\rangle
-\varepsilon .
\end{eqnarray*}%
Passing to the limit as $\varepsilon \rightarrow 0^{+}$ we obtain the
desired inequality: 
\begin{equation*}
\hat{\gamma}_{F}\left( \xi ,u^{j}\left( \xi \right) \right) \geq \frac{1}{%
2\left\langle \xi ,\nabla f\left( \xi \right) \right\rangle \left\Vert
u^{j}\left( \xi \right) \right\Vert ^{2}}\left\langle \nabla ^{2}f\left( \xi
\right) u^{j}\left( \xi \right) ,u^{j}\left( \xi \right) \right\rangle .
\end{equation*}

\medskip

In order to show the opposite inequality let us assume that $\hat{\gamma}%
_{F}\left( \xi ,u^{j}\left( \xi \right) \right) >0$ (the case $\hat{\gamma}%
_{F}\left( \xi ,u^{j}\left( \xi \right) \right) =0$ is trivial).

Let us fix $\varepsilon >0$ and $0<\overline{\delta }=\overline{\delta }%
\left( \varepsilon \right) \leq \min \left\{ \delta ^{\prime },\delta
_{1}\right\} $ such that (\ref{R4}), (\ref{6}), (\ref{8}) and%
\begin{equation}
\frac{\widehat{\mathfrak{C}}_{F}\left( r,\eta ,u^{j}\left( \eta \right)
\right) }{r^{2}}>\hat{\gamma}_{F}\left( \xi ,u^{j}\left( \xi \right) \right)
-\frac{\varepsilon }{4}  \label{12}
\end{equation}%
holds for every $0<r<\overline{\delta }$ and $\eta \in \partial F$ with $%
\left\Vert \eta -\xi \right\Vert <\overline{\delta }$.

Let us fix $0<r<\frac{\overline{\delta }}{2}$, $\eta \in \left( \xi +\frac{%
\overline{\delta }}{2}\mathbf{B}\right) \cap \partial F$ and respective $%
\eta ^{\ast }$. We have
\begin{eqnarray}
\widehat{\mathfrak{C}}_{F}\left( r,\eta ,u^{j}\left( \eta \right) \right)
&=&\inf \left\{ \left\langle \eta -y,\eta ^{\ast }\right\rangle :y\in
\partial F\cap P\left( \eta ,u^{j}\left( \eta \right) \right) ,\left\Vert
\eta -y\right\Vert =r\right\}  \notag \\
&=&\frac{1}{\left\langle \eta ,\nabla f\left( \eta \right) \right\rangle }
\inf \left\{ \left\langle \eta -y,\nabla f\left( \eta \right) \right\rangle
:y\in \partial F\cap P\left( \eta ,u^{j}\left( \eta \right) \right),\left\Vert \eta -y\right\Vert =r\right\}.   \notag \\
\label{11}
\end{eqnarray}%
Now let us fix $y\in \partial F\cap P\left( \eta ,u^{j}\left( \eta \right)
\right) $ with $\left\Vert \eta -y\right\Vert =r$ and define $v$ as in (\ref%
{9}). Proceeding as above we obtain (\ref{3}) for some $\tau =\tau \left(
r,\eta \right) \in \left] 0,r\right[ $. Let us fix this $\tau $. Using (\ref%
{3}), (\ref{10}), (\ref{6}) and (\ref{8}), respectively, we obtain%
\begin{eqnarray*}
\frac{1}{r^{2}}\frac{1}{\left\langle \eta ,\nabla f\left( \eta \right)
\right\rangle }\left\langle \eta -y,\nabla f\left( \eta \right)
\right\rangle &=&\frac{1}{2\left\langle \eta ,\nabla f\left( \eta \right)
\right\rangle }\left\langle \nabla ^{2}f\left( \eta +\tau v\right)
v,v\right\rangle \\
&<&\frac{1}{2\left\langle \xi ,\nabla f\left( \xi \right) \right\rangle }%
\left\langle \nabla ^{2}f\left( \xi \right) v,v\right\rangle +\frac{%
\varepsilon }{4} \\
&<&\frac{1}{2\left\langle \xi ,\nabla f\left( \xi \right) \right\rangle }%
\left\langle \nabla ^{2}f\left( \xi \right) \frac{u^{j}\left( \xi \right) }{%
\left\Vert u^{j}\left( \xi \right) \right\Vert },\frac{u^{j}\left( \xi
\right) }{\left\Vert u^{j}\left( \xi \right) \right\Vert }\right\rangle +%
\frac{\varepsilon }{2}+\frac{\varepsilon }{4} \\
&=&\frac{1}{2\left\langle \xi ,\nabla f\left( \xi \right) \right\rangle }%
\frac{1}{\left\Vert u^{j}\left( \xi \right) \right\Vert ^{2}}\left\langle
\nabla ^{2}f\left( \xi \right) u^{j}\left( \xi \right) ,u^{j}\left( \xi
\right) \right\rangle +\frac{3\varepsilon }{4}.
\end{eqnarray*}%
Consequently (see (\ref{11}))%
\begin{equation*}
\frac{\widehat{\mathfrak{C}}_{F}\left( r,\eta ,u^{j}\left( \eta \right)
\right) }{r^{2}}<\frac{1}{2\left\langle \xi ,\nabla f\left( \xi \right)
\right\rangle }\frac{1}{\left\Vert u^{j}\left( \xi \right) \right\Vert ^{2}}%
\left\langle \nabla ^{2}f\left( \xi \right) u^{j}\left( \xi \right)
,u^{j}\left( \xi \right) \right\rangle +\frac{3\varepsilon }{4},
\end{equation*}%
\ and by (\ref{12})%
\begin{equation*}
\hat{\gamma}_{F}\left( \xi ,u^{j}\left( \xi \right) \right) <\frac{1}{%
2\left\langle \xi ,\nabla f\left( \xi \right) \right\rangle }\frac{1}{%
\left\Vert u^{j}\left( \xi \right) \right\Vert ^{2}}\left\langle \nabla
^{2}f\left( \xi \right) u^{j}\left( \xi \right) ,u^{j}\left( \xi \right)
\right\rangle +\varepsilon .
\end{equation*}%
Passing to the limit as $\varepsilon \rightarrow 0^{+}$ we obtain the
inequality $"\leq "$ in (\ref{1}).
\end{proof}

\bigskip

Remembering the Definition \ref{def} we have%
\begin{equation}
\hat{\varkappa}_{F}\left( \xi ,u^{j}\left( \xi \right) \right) =\frac{1}{%
2\left\Vert \nabla f\left( \xi \right) \right\Vert \left\Vert u^{j}\left(
\xi \right) \right\Vert ^{2}}\left\langle \nabla ^{2}f\left( \xi \right)
u^{j}\left( \xi \right) ,u^{j}\left( \xi \right) \right\rangle \text{, }\ \
j\in I\backslash \left\{ i\right\} .  \label{R3}
\end{equation}

Note that, for a fixed $j\in I\backslash \left\{ i\right\} $ and $\lambda
\in \mathbb{R}\backslash \left\{ 0\right\} $ we have%
\begin{equation}
\hat{\varkappa}_{F}\left( \xi ,\lambda u^{j}\left( \xi \right) \right) =%
\frac{1}{2\left\Vert \nabla f\left( \xi \right) \right\Vert \left\Vert
u^{j}\left( \xi \right) \right\Vert ^{2}}\left\langle \nabla ^{2}f\left( \xi
\right) u^{j}\left( \xi \right) ,u^{j}\left( \xi \right) \right\rangle ,
\label{Cn2}
\end{equation}%
as it would be expected. Moreover, following the proof of Theorem \ref{Teo1}
it is possible to prove that

\begin{corollary}
\label{CTeo1}We have%
\begin{equation*}
\hat{\varkappa}_{F}\left( \xi ,u\left( \xi \right) \right) =\frac{1}{%
2\left\Vert \nabla f\left( \xi \right) \right\Vert \left\Vert u\left( \xi
\right) \right\Vert ^{2}}\left\langle \nabla ^{2}f\left( \xi \right) u\left(
\xi \right) ,u\left( \xi \right) \right\rangle ,
\end{equation*}%
for any $u\left( \xi \right) \in \mathbf{T}_{F}\left( \xi \right) \backslash
\left\{ \mathbf{0}\right\} $.
\end{corollary}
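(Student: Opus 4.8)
The plan is to retrace the proof of Theorem \ref{Teo1} and to observe that that argument uses the coordinate fields $u^{j}(\cdot)$ only through three structural features, each of which is shared by the directional field $u(\cdot)$ attached to an arbitrary $u(\xi)\in\mathbf{T}_{F}(\xi)\backslash\{\mathbf{0}\}$. First I would recall the notation from the paragraph preceding the corollary: write $u(\xi)=\sum_{j\neq i}\alpha_{j}u^{j}(\xi)$ with scalars $\alpha_{j}$ ($j\in I\backslash\{i\}$) not all zero, and for $\eta\in\partial F$ near $\xi$ let $u(\eta)$ be the vector whose $j$th coordinate ($j\neq i$) equals $\alpha_{j}$ and whose $i$th coordinate equals $-\sum_{j\neq i}\alpha_{j}f_{x_{j}}(\eta)/f_{x_{i}}(\eta)$. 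Then I record: (a) $u(\eta)\in\mathbf{T}_{F}(\eta)$, since $\langle u^{j}(\eta),\nabla f(\eta)\rangle=0$ for each $j$ and hence $\langle u(\eta),\nabla f(\eta)\rangle=0$ (already noted in the text); (b) $\eta\mapsto u(\eta)$ is continuous near $\xi$ with $u(\eta)\to u(\xi)$ and $\|u(\eta)\|\to\|u(\xi)\|>0$, because $f_{x_{i}}(\xi)\neq 0$ (see (\ref{fi})) and $\nabla f(\cdot)$ is continuous at $\xi$ (Remark \ref{r1}); (c) $u(\eta)\neq\mathbf{0}$ for $\eta$ close to $\xi$, since some $\alpha_{j_{0}}\neq 0$ is literally the $j_{0}$th coordinate of $u(\eta)$.

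Next I would reproduce the three uniform estimates used in the proof of Theorem \ref{Teo1}. Estimates (\ref{R4}) and (\ref{6}) do not involve $u^{j}$ at all, so they stay valid unchanged. Only the analogue of (\ref{8}) must be re-established: given $\varepsilon>0$ there is $\overline{\delta}>0$ such that, for all $\eta,y\in\xi+\overline{\delta}\mathbf{B}$ with $y\in P(\eta,u(\eta))$, $y\neq\eta$ and $f(y)=f(\eta)=0$, the unit secant $v:=(y-\eta)/\|y-\eta\|$ satisfies
\[
\Bigl|\bigl\langle\nabla^{2}f(\xi)v,v\bigr\rangle-\Bigl\langle\nabla^{2}f(\xi)\tfrac{u(\xi)}{\|u(\xi)\|},\tfrac{u(\xi)}{\|u(\xi)\|}\Bigr\rangle\Bigr|<\langle\xi,\nabla f(\xi)\rangle\,\varepsilon .
\]
This is proved exactly as for $u^{j}$: one parametrizes the plane curve $\partial F\cap P(\eta,u(\eta))$ near $\eta$ — writing its points as $\eta+s\nabla f(\eta)+t\,u(\eta)$ and solving $f=0$ for $s=s(t)$ by the Implicit Function Theorem, where $u(\eta)\perp\nabla f(\eta)$ forces $s(0)=s'(0)=0$, so that $y-\eta=t\,u(\eta)+O(t^{2})$ and hence $v\to u(\eta)/\|u(\eta)\|$ as $y\to\eta$, uniformly for $\eta$ near $\xi$ by the $\mathcal{C}^{2}$-regularity of $f$ and the lower bound (\ref{5'}); combining this with (b) and the continuity of $\nabla^{2}f(\cdot)$ at $\xi$ gives the displayed inequality. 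Equivalently one may argue through the implicit function $g$ and the Lagrange Mean Value Theorem, precisely as in Theorem \ref{Teo1}.

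With these three estimates in hand, the two chains of inequalities in the proof of Theorem \ref{Teo1} — the lower bound obtained from (\ref{4}), (\ref{3}) and (\ref{7}), and the upper bound obtained from (\ref{12}), (\ref{11}) and (\ref{3}) — carry over verbatim with every occurrence of $u^{j}$ replaced by $u$; note that the Taylor expansion (\ref{3}) is a statement about the secant and not about $u^{j}$, and that Remark \ref{modulo} applies to any nonzero tangent vector. This yields
\[
\hat{\gamma}_{F}\bigl(\xi,u(\xi)\bigr)=\frac{1}{2\langle\xi,\nabla f(\xi)\rangle\,\|u(\xi)\|^{2}}\bigl\langle\nabla^{2}f(\xi)u(\xi),u(\xi)\bigr\rangle .
\]
Finally, by Definition \ref{def} and (\ref{R2}) one has $\hat{\varkappa}_{F}(\xi,u(\xi))=\|\xi^{\ast}\|^{-1}\hat{\gamma}_{F}(\xi,u(\xi))$ with $\|\xi^{\ast}\|^{-1}=\langle\xi,\nabla f(\xi)\rangle/\|\nabla f(\xi)\|$, so the factor $\langle\xi,\nabla f(\xi)\rangle$ cancels and the asserted formula follows. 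The only step that is not a straight transcription of the proof of Theorem \ref{Teo1} is the uniform control of the secant direction $v$ (the analogue of (\ref{8})); this is where I expect the one genuine, though mild, difficulty to lie, and it is handled as sketched above.
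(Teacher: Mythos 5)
Your proposal is correct and follows exactly the route the paper intends: the paper gives no separate proof of Corollary \ref{CTeo1}, asserting only that it is obtained by ``following the proof of Theorem \ref{Teo1}'', and your retracing of that argument with $u^{j}$ replaced by $u$ --- including the correct identification of the analogue of (\ref{8}) (uniform control of the secant direction) as the only step needing genuine re-verification, and the final cancellation of $\left\langle \xi ,\nabla f\left( \xi \right) \right\rangle $ through $\left\Vert \xi ^{\ast }\right\Vert $ --- is precisely that. Nothing further is needed.
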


Notice that, by (\ref{Cn2}), for $n=2,$ to say that $\hat{\varkappa}%
_{F}\left( \xi ,u\left( \xi \right) \right) >0$ for some $u\left( \xi
\right) \in \mathbf{T}_{F}\left( \xi \right) \backslash \left\{ \mathbf{0}%
\right\} $, is the same as saying that $\hat{\varkappa}_{F}\left( \xi
,\left( -f_{2}\left( \xi \right) ,f_{1}\left( \xi \right) \right) \right) >0$%
. Therefore, by Proposition \ref{n2}, if there is $\lambda \in \mathbb{R}%
\backslash \left\{ 0\right\} $ such that $\hat{\varkappa}_{F}\left( \xi
,\lambda \left( -f_{2}\left( \xi \right) ,f_{1}\left( \xi \right) \right)
\right) >0$, we will have 
\begin{equation*}
\hat{\varkappa}_{F}\left( \xi ,u\left( \xi \right) \right) =\hat{\varkappa}%
_{F}\left( \xi \right) ,\ \ \ \ \forall u\left( \xi \right) \in \mathbf{T}%
_{F}\left( \xi \right) \backslash \left\{ \mathbf{0}\right\} .
\end{equation*}

\bigskip

Since $f$ is of class $\mathcal{C}^{2}$ in $\xi +\delta \mathbf{B,}$ all the
conclusions will remain valid if we replace $\xi $ with any $\eta \in
\partial F\cap \left( \xi +\delta \mathbf{B}\right) .$

\bigskip

Following the idea of G. Crasta and A. Malusa presented in \cite[pg.5749]%
{CrastaMalusa}, we have the following result.

\begin{theorem}
Let $\xi \in \partial F$ and $\hat{\varkappa}_{F}\left( \xi ,u^{j_{1}}\left(
\xi \right) \right) \leq \cdots \leq \hat{\varkappa}_{F}\left( \xi
,u^{j_{n-1}}\left( \xi \right) \right) $ be the curvatures of $F$ at $\xi $
in the direction of the $n-1$ vectors that generate $\mathbf{T}_{F}\left(
\xi \right) $. If 
\begin{equation}
\left\Vert \nabla ^{2}f\left( \xi \right) \right\Vert :=\sup_{\substack{ %
u,v\in \mathbb{R}^{n}  \\ \left\Vert u\right\Vert =\left\Vert v\right\Vert
=1 }}\left\vert \left\langle \nabla ^{2}f\left( \xi \right) u,v\right\rangle
\right\vert <\infty ,  \label{Norma}
\end{equation}%
then%
\begin{equation*}
\hat{\varkappa}_{F}\left( \xi ,u^{j_{1}}\left( \xi \right) \right)
=\min_{u\in U_{\xi }}\hat{\varkappa}\left( u\right) \text{ \ \ and }\ \ \hat{%
\varkappa}_{F}\left( \xi ,u^{j_{n-1}}\left( \xi \right) \right) =\max_{u\in
U_{\xi }}\hat{\varkappa}\left( u\right) ,
\end{equation*}%
where $\hat{\varkappa}\left( u\right) :=\frac{1}{2\left\Vert \nabla f\left(
\xi \right) \right\Vert }\left\langle \nabla ^{2}f\left( \xi \right)
u,u\right\rangle $ and $U_{\xi }:=\left\{ v\in \mathbf{T}_{F}\left( \xi
\right) :\left\Vert v\right\Vert =1\right\} $.
\end{theorem}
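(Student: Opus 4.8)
The plan is to recognise the statement as the Rayleigh-quotient characterisation of the extreme eigenvalues of the second fundamental form at $\xi$, reached after passing to coordinates adapted to $\xi$. First I would use Corollary \ref{CTeo1}: for every $u\in\mathbf{T}_{F}(\xi)\setminus\{\mathbf{0}\}$ one has $\hat{\varkappa}_{F}(\xi,u)=\frac{1}{2\Vert\nabla f(\xi)\Vert\,\Vert u\Vert^{2}}\langle\nabla^{2}f(\xi)u,u\rangle=\hat{\varkappa}(u/\Vert u\Vert)$, so on the sphere $U_{\xi}$ the directional curvature coincides with $\hat{\varkappa}(\cdot)$. Since $U_{\xi}$ is a compact subset of the subspace $\mathbf{T}_{F}(\xi)$ and, by (\ref{Norma}), $u\mapsto\langle\nabla^{2}f(\xi)u,u\rangle$ is a bounded continuous quadratic form, the minimum and maximum over $U_{\xi}$ are attained; the task then becomes to identify the smallest and the largest of the $n-1$ numbers $\hat{\varkappa}_{F}(\xi,u^{j}(\xi))$, $j\in I\setminus\{i\}$, with $\min_{u\in U_{\xi}}\hat{\varkappa}(u)$ and $\max_{u\in U_{\xi}}\hat{\varkappa}(u)$.

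Next, following the idea of \cite{CrastaMalusa}, I would reduce to the case in which the $n-1$ generating vectors $u^{j}(\xi)$ are the principal directions of $\nabla^{2}f(\xi)$ restricted to $\mathbf{T}_{F}(\xi)$. Because $f\in\mathcal{C}^{2}$, the matrix $\nabla^{2}f(\xi)$ is symmetric, so the bilinear form $(u,v)\mapsto\langle\nabla^{2}f(\xi)u,v\rangle$ restricted to the $(n-1)$-dimensional $\mathbf{T}_{F}(\xi)$ has an orthonormal eigenbasis $w_{1},\dots,w_{n-1}$ with $\langle\nabla^{2}f(\xi)w_{a},w_{b}\rangle=\lambda_{a}\delta_{ab}$, $\lambda_{1}\le\cdots\le\lambda_{n-1}$. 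I would then change coordinates by the orthogonal map $R$ sending $w_{1},\dots,w_{n-1},\nabla f(\xi)/\Vert\nabla f(\xi)\Vert$ to the canonical basis: this is harmless, since the formula of Corollary \ref{CTeo1} is invariant under orthogonal substitutions (for orthogonal $R$ one has $\nabla(f\circ R^{-1})(R\xi)=R\nabla f(\xi)$, $\nabla^{2}(f\circ R^{-1})(R\xi)=R\nabla^{2}f(\xi)R^{T}$, hence $\langle\nabla^{2}(f\circ R^{-1})(R\xi)Ru,Ru\rangle=\langle\nabla^{2}f(\xi)u,u\rangle$, $\Vert R\nabla f(\xi)\Vert=\Vert\nabla f(\xi)\Vert$, $\Vert Ru\Vert=\Vert u\Vert$), and all the standing hypotheses are preserved ($R$ fixes $\mathbf{0}$ and the inner product, so $\mathbf{0}\in\mbox{int}(RF)$, $\langle R\xi,\nabla(f\circ R^{-1})(R\xi)\rangle=\langle\xi,\nabla f(\xi)\rangle>0$, and $f\circ R^{-1}$ is of class $\mathcal{C}^{2}$ on $R\xi+\delta\mathbf{B}$). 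In the new coordinates $\nabla f(\xi)$ is parallel to $e_{n}$, so $i=n$, $f_{x_{j}}(\xi)=0$ for $j\le n-1$, and therefore $u^{j}(\xi)=e_{j}=w_{j}$ for every $j\in\{1,\dots,n-1\}$.

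It then remains to run the elementary Rayleigh estimate: for $v=\sum_{k=1}^{n-1}t_{k}e_{k}\in U_{\xi}$ (so $\sum_{k}t_{k}^{2}=1$) we get $\langle\nabla^{2}f(\xi)v,v\rangle=\sum_{k}\lambda_{k}t_{k}^{2}\in[\lambda_{1},\lambda_{n-1}]$, the lower value being attained at $v=e_{1}$ and the upper at $v=e_{n-1}$; dividing by $2\Vert\nabla f(\xi)\Vert$ gives $\min_{u\in U_{\xi}}\hat{\varkappa}(u)=\lambda_{1}/(2\Vert\nabla f(\xi)\Vert)$ and $\max_{u\in U_{\xi}}\hat{\varkappa}(u)=\lambda_{n-1}/(2\Vert\nabla f(\xi)\Vert)$. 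Since $\hat{\varkappa}_{F}(\xi,u^{j}(\xi))=\hat{\varkappa}(w_{j})=\lambda_{j}/(2\Vert\nabla f(\xi)\Vert)$ for every $j$, the smallest of these $n-1$ numbers — namely $\hat{\varkappa}_{F}(\xi,u^{j_{1}}(\xi))$ — equals $\min_{u\in U_{\xi}}\hat{\varkappa}(u)$ and the largest — namely $\hat{\varkappa}_{F}(\xi,u^{j_{n-1}}(\xi))$ — equals $\max_{u\in U_{\xi}}\hat{\varkappa}(u)$, which is the assertion.

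The linear-algebra content (the spectral theorem and the Rayleigh estimate) is routine. The step I expect to be the real obstacle is the middle one: making precise that the $n-1$ generating vectors may be taken to be the principal directions of the Hessian restricted to $\mathbf{T}_{F}(\xi)$, and carefully checking that both the directional curvature (through the formula of Corollary \ref{CTeo1}) and every hypothesis on $f$ and on $F$ are left intact by the orthogonal change of variables used to achieve this.
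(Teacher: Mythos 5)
Your route is genuinely different from the paper's. The paper argues by compactness alone: under (\ref{Norma}) the map $u\mapsto\hat{\varkappa}(u)$ is continuous on $U_{\xi}$, hence attains a maximum at some $\overline{u}$; then $\hat{\varkappa}(\overline{u})\geq\hat{\varkappa}_{F}(\xi,u^{j_{n-1}}(\xi))$ because $u^{j_{n-1}}(\xi)/\Vert u^{j_{n-1}}(\xi)\Vert\in U_{\xi}$, and the reverse inequality is asserted in one line from $\overline{u}\in\mathbf{T}_{F}(\xi)$; the minimum is treated symmetrically. You instead diagonalize the Hessian restricted to $\mathbf{T}_{F}(\xi)$ and run the Rayleigh-quotient estimate after an orthogonal change of coordinates. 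Your verification that the formula of Corollary \ref{CTeo1} and all standing hypotheses are invariant under such a rotation is correct.

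However, the step you yourself flag as the obstacle is a genuine gap, and it cannot be closed for the statement as literally written. The vectors $u^{j}(\xi)$, $j\in I\setminus\{i\}$, are pinned down earlier in the paper (entry $1$ in the $j$th coordinate, $-f_{x_{j}}(\xi)/f_{x_{i}}(\xi)$ in the $i$th, $0$ elsewhere); they are not free to be chosen, and after your rotation the adapted generators of the rotated problem are the eigenvectors $w_{a}$, which are different vectors with different curvatures from the original $u^{j}(\xi)$. So your argument proves the theorem for the rotated body and its own generators, i.e.\ for the principal directions, not for the original $u^{j}(\xi)$. With the coordinate generators the claim is simply false: take $n=3$ and $f(x)=\tfrac{1}{2}(x_{1}+x_{2})^{2}+x_{3}-1$ near $\xi=(0,0,1)$, so that $i=3$, $u^{1}(\xi)=e_{1}$, $u^{2}(\xi)=e_{2}$, $\hat{\varkappa}_{F}(\xi,u^{1}(\xi))=\hat{\varkappa}_{F}(\xi,u^{2}(\xi))=\tfrac{1}{2}$, while $\hat{\varkappa}\left((e_{1}+e_{2})/\sqrt{2}\right)=1$ and $\hat{\varkappa}\left((e_{1}-e_{2})/\sqrt{2}\right)=0$. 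To be fair, the paper's own proof hides the identical difficulty in the unjustified inequality $\hat{\varkappa}_{F}(\xi,\overline{u})\leq\hat{\varkappa}_{F}(\xi,u^{j_{n-1}}(\xi))$, which does not follow from $\overline{u}\in\mathbf{T}_{F}(\xi)$ alone; the theorem is correct only if the generating vectors are understood, as in the cited Crasta--Malusa setting, to be the principal directions, i.e.\ eigenvectors of $\nabla^{2}f(\xi)$ restricted to $\mathbf{T}_{F}(\xi)$ --- which is exactly the reading your proof supplies. If you add that hypothesis explicitly, your argument is complete; without it, neither your proof nor the paper's establishes the displayed equalities.
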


\begin{proof}
Assuming (\ref{Norma}) it is easy to show that the application $u\mapsto 
\hat{\varkappa}\left( u\right) $ is continuous in $\mathbf{S}:=\left\{ x\in 
\mathbb{R}^{n}:\left\Vert x\right\Vert =1\right\} $, and in particular in $%
U_{\xi }.$ Hence it admits a maximum and a minimum on $U_{\xi }$. Let $%
\overline{u}\in U_{\xi }$ be a maximum point. Then, by (\ref{R3}),%
\begin{equation*}
\hat{\varkappa}\left( \overline{u}\right) =\max_{u\in U_{\xi }}\hat{\varkappa%
}\left( u\right) \geq \hat{\varkappa}\left( \frac{u^{j_{n-1}}\left( \xi
\right) }{\left\Vert u^{j_{n-1}}\left( \xi \right) \right\Vert }\right) =%
\hat{\varkappa}_{F}\left( \xi ,u^{j_{n-1}}\left( \xi \right) \right) .
\end{equation*}%
On the other hand, since $\overline{u}\in \mathbf{T}_{F}\left( \xi \right) ,$
then $\hat{\varkappa}_{F}\left( \xi ,\overline{u}\right) \leq \hat{\varkappa}%
_{F}\left( \xi ,u^{j_{n-1}}\left( \xi \right) \right) $, and consequently $%
\hat{\varkappa}\left( \overline{u}\right) =\hat{\varkappa}_{F}\left( \xi
,u^{j_{n-1}}\left( \xi \right) \right) .$ Reasoning as above, if $\overline{v%
}$ is a minimum on $U_{\xi }$, we deduce that $\hat{\varkappa}\left( 
\overline{v}\right) =\hat{\varkappa}_{F}\left( \xi ,u^{j_{1}}\left( \xi
\right) \right) .$
\end{proof}

\section{Directional curvature radius}

As in \cite[p.14]{Busemann} (see also \cite{Schneider, Adiprasito}) we also
relate the directional curvature to the radius of some ball.

\begin{definition}
The $\emph{2}$\emph{-dimensional curvature radius of }$F$\emph{\ at }$\xi
\in \partial F$ (w.r.t. $\xi ^{\ast }$) \emph{in the direction of }$%
u^{j}\left( \xi \right) ,$ $j\in I\backslash \left\{ i\right\} ,$ is given by%
\begin{equation}
\widehat{\mathfrak{R}}_{F}\left( \xi ,u^{j}\left( \xi \right) \right) =\frac{%
1}{2\hat{\varkappa}_{F}\left( \xi ,u^{j}\left( \xi \right) \right) }.
\label{raio}
\end{equation}
\end{definition}

Roughly speaking, the directional curvature $\hat{\varkappa}_{F}\left( \xi
,u^{j}\left( \xi \right) \right) $ shows how rotund the boundary $\partial F$
is in a neighbourhood of $\xi $ (watching from the end of the vector $\xi
^{\ast }$) when we "cut" $F$ with the plane $P\left( \xi ,u^{j}\left( \xi
\right) \right) $. As follows from Proposition \ref{00} it does not depend
on the position of the origin in $\mbox{int}F$ and can be defined also when $0\not\in \mbox{int}F$. By using (\ref{raio}) we give the following
geometric characterization of the directional curvature radius.

\begin{proposition}
Fixed $j\in I\backslash \left\{ i\right\} $, we have%
\begin{equation}
\frac{\widehat{\mathfrak{R}}_{F}\left( \xi ,u^{j}\left( \xi \right) \right) 
}{\left\Vert \xi ^{\ast }\right\Vert }=\underset{_{\substack{ \left(
\varepsilon ,\eta \right) \rightarrow \left( 0^{+},\xi \right)  \\ \eta \in
\partial F}}}{\lim \sup }\inf \left\{ r>0:F\cap P\left( \eta ,u^{j}\left(
\eta \right) \right) \cap \left( \eta +\varepsilon \overline{\mathbf{B}}%
\right) \subset \eta -r\eta ^{\ast }+r\left\Vert \eta ^{\ast }\right\Vert 
\overline{\mathbf{B}}\right\}.  \label{06}
\end{equation}
\end{proposition}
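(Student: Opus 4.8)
The plan is to turn the inclusion in (\ref{06}) into an elementary scalar inequality, recognise the resulting infimum/supremum as a rescaling of the modulus $\widehat{\mathfrak{C}}_{F}$, and then read off the value from Theorem \ref{Teo1}. First I would fix $\eta\in\partial F$ near $\xi$ together with its $\eta^{\ast}=\frac{1}{\langle\eta,\nabla f(\eta)\rangle}\nabla f(\eta)$; by Remark \ref{r1} and Proposition \ref{ConeN} we have $\langle\eta,\nabla f(\eta)\rangle>0$ and $\eta^{\ast}\in\mathbf{N}_{F}(\eta)$, so $\langle\eta-y,\eta^{\ast}\rangle\ge0$ for every $y\in F$. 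Expanding $\left\Vert y-\eta+r\eta^{\ast}\right\Vert^{2}\le r^{2}\left\Vert\eta^{\ast}\right\Vert^{2}$, for $y\neq\eta$ one gets
\begin{equation*}
y\in\eta-r\eta^{\ast}+r\left\Vert\eta^{\ast}\right\Vert\overline{\mathbf{B}}\iff\left\Vert y-\eta\right\Vert^{2}+2r\langle y-\eta,\eta^{\ast}\rangle\le0\iff r\ge\frac{\left\Vert y-\eta\right\Vert^{2}}{2\langle\eta-y,\eta^{\ast}\rangle}
\end{equation*}
(with the convention $t/0=+\infty$ for $t>0$). Hence the inner infimum in (\ref{06}) equals $\sup\{\frac{\left\Vert y-\eta\right\Vert^{2}}{2\langle\eta-y,\eta^{\ast}\rangle}:y\in F\cap P(\eta,u^{j}(\eta))\cap(\eta+\varepsilon\overline{\mathbf{B}}),\ y\neq\eta\}$, a supremum over a nonempty set since $F\cap P(\eta,u^{j}(\eta))$ is, as everywhere in the paper, a plane convex region through $\eta$.

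Next I would group the admissible $y$ according to the value $\rho:=\left\Vert y-\eta\right\Vert\in(0,\varepsilon]$ and invoke Remark \ref{modulo}, applied at the moving point $\eta$ instead of $\xi$ (legitimate since $f$ is $\mathcal{C}^{2}$ near $\xi$ and the statements of Section \ref{Directional} persist at nearby boundary points): the above supremum then equals $\sup_{0<\rho\le\varepsilon}\frac{\rho^{2}}{2\widehat{\mathfrak{C}}_{F}(\rho,\eta,u^{j}(\eta))}$, a value $\widehat{\mathfrak{C}}_{F}(\rho,\eta,u^{j}(\eta))=+\infty$ simply contributing $0$. Thus the right-hand side of (\ref{06}) becomes
\begin{equation*}
\limsup_{(\varepsilon,\eta)\to(0^{+},\xi)}\ \sup_{0<\rho\le\varepsilon}\frac{\rho^{2}}{2\widehat{\mathfrak{C}}_{F}(\rho,\eta,u^{j}(\eta))}.
\end{equation*}

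It remains to match this with the left-hand side. By Definition \ref{def} and (\ref{raio}), $\frac{\widehat{\mathfrak{R}}_{F}(\xi,u^{j}(\xi))}{\left\Vert\xi^{\ast}\right\Vert}=\frac{1}{2\hat{\gamma}_{F}(\xi,u^{j}(\xi))}$, while $\hat{\gamma}_{F}(\xi,u^{j}(\xi))=\liminf_{(\rho,\eta)\to(0^{+},\xi),\,\eta\in\partial F}\widehat{\mathfrak{C}}_{F}(\rho,\eta,u^{j}(\eta))/\rho^{2}$. When $\hat{\gamma}_{F}(\xi,u^{j}(\xi))>0$, the proof of Theorem \ref{Teo1} gives more than this liminf: both one-sided estimates in that proof are uniform in $\eta$, so $\widehat{\mathfrak{C}}_{F}(\rho,\eta,u^{j}(\eta))/\rho^{2}$ converges to $\hat{\gamma}_{F}(\xi,u^{j}(\xi))$ as $(\rho,\eta)\to(0^{+},\xi)$. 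Then, for $\delta>0$, on a neighbourhood of $(0^{+},\xi)$ one has $\widehat{\mathfrak{C}}_{F}(\rho,\eta,u^{j}(\eta))/\rho^{2}>\hat{\gamma}_{F}(\xi,u^{j}(\xi))-\delta$, which bounds the double $\limsup$ by $\frac{1}{2(\hat{\gamma}_{F}(\xi,u^{j}(\xi))-\delta)}$, giving $"\le"$ after $\delta\to0$; and for $"\ge"$ I would take any $(\rho_{k},\eta_{k})\to(0^{+},\xi)$ with $\varepsilon_{k}:=\max\{\rho_{k},1/k\}$ and use that the inner sup at $(\varepsilon_{k},\eta_{k})$ dominates $\frac{\rho_{k}^{2}}{2\widehat{\mathfrak{C}}_{F}(\rho_{k},\eta_{k},u^{j}(\eta_{k}))}\to\frac{1}{2\hat{\gamma}_{F}(\xi,u^{j}(\xi))}$. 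When $\hat{\gamma}_{F}(\xi,u^{j}(\xi))=0$, both sides are $+\infty$: on the left $\widehat{\mathfrak{R}}_{F}(\xi,u^{j}(\xi))=+\infty$, and on the right the same sequence construction, now along a sequence realising the liminf (so $\widehat{\mathfrak{C}}_{F}(\rho_{k},\eta_{k},u^{j}(\eta_{k}))/\rho_{k}^{2}\to0$), forces the $\limsup$ to be infinite.

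The main obstacle is this last step: collapsing the $\limsup$/$\liminf$ in the three joint variables $(\varepsilon,\rho,\eta)$ to the single number $\tfrac{1}{2}\hat{\gamma}_{F}(\xi,u^{j}(\xi))^{-1}$ relies on the genuine convergence — not merely the positive liminf — of $\widehat{\mathfrak{C}}_{F}(\rho,\eta,u^{j}(\eta))/\rho^{2}$, which must be harvested from the uniformity of the two estimates inside the proof of Theorem \ref{Teo1}. The remaining points (the convention $t/0=+\infty$, nonemptiness of the supremum, the harmless $+\infty$ values of $\widehat{\mathfrak{C}}_{F}$, and the degenerate case $\hat{\gamma}_{F}(\xi,u^{j}(\xi))=0$) are routine bookkeeping.
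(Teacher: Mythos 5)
Your proof is correct and reaches the identity by a somewhat different organization than the paper. The paper proves the two inequalities separately: for each $\rho >R$ (resp.\ $\rho <R$) it converts the ball inclusion (resp.\ its failure) into the scalar inequality $2\rho \left\langle \eta -\zeta ,\eta ^{\ast }\right\rangle \geq \left\Vert \zeta -\eta \right\Vert ^{2}$ (resp.\ $<$), uses the convexity argument behind Remark \ref{modulo} to pass from points at distance exactly $\varepsilon $ to all $w$ with $\left\Vert w-\eta \right\Vert \geq \varepsilon $, and squeezes $\hat{\gamma}_{F}\left( \xi ,u^{j}\left( \xi \right) \right) $ against $\frac{1}{2\rho }$ from both sides. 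You instead evaluate the inner infimum in closed form as $\sup_{0<\rho \leq \varepsilon }\rho ^{2}/\left( 2\widehat{\mathfrak{C}}_{F}\left( \rho ,\eta ,u^{j}\left( \eta \right) \right) \right) $; this is the same computation packaged once instead of twice, and it is a genuine simplification. The only place where you take on an unnecessary burden is the final identification: since the supremum over $\varepsilon <\delta $, $\left\Vert \eta -\xi \right\Vert <\delta $ of the supremum over $0<\rho \leq \varepsilon $ ranges over exactly the pairs $\left( \rho ,\eta \right) $ with $\rho <\delta $, $\left\Vert \eta -\xi \right\Vert <\delta $, your double $\limsup $ collapses to $\limsup_{\left( \rho ,\eta \right) \rightarrow \left( 0^{+},\xi \right) }\rho ^{2}/\left( 2\widehat{\mathfrak{C}}_{F}\left( \rho ,\eta ,u^{j}\left( \eta \right) \right) \right) $, which equals $1/\left( 2\hat{\gamma}_{F}\left( \xi ,u^{j}\left( \xi \right) \right) \right) $ by the elementary identity $\limsup \left( 1/a\right) =1/\liminf a$ for nonnegative quantities. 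No appeal to genuine convergence of $\widehat{\mathfrak{C}}_{F}\left( \rho ,\eta ,u^{j}\left( \eta \right) \right) /\rho ^{2}$, i.e.\ to the uniform two-sided estimates inside the proof of Theorem \ref{Teo1}, is needed; dropping that step both shortens your argument and shows the proposition depends only on the definition of $\hat{\gamma}_{F}$, exactly as in the paper's proof.
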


\begin{proof}
Let us prove first the inequality $"\leq"$ in (\ref{06}) assuming without
loss of generality that the right-hand side (further denoted by $R$) is
finite. Taking an arbitrary $\rho >R$, by the definition of $\lim \sup $, we
can afirm that for each $\varepsilon >0$ small enough and for each $\eta \in
\partial F$ from a neighbourhood of $\xi $, the relation%
\begin{equation*}
\inf \left\{ r>0:F\cap P\left( \eta ,u^{j}\left( \eta \right) \right) \cap
\left( \eta +\varepsilon \overline{\mathbf{B}}\right) \subset \eta -r\eta
^{\ast }+r\left\Vert \eta ^{\ast }\right\Vert \overline{\mathbf{B}}\right\}
<\rho
\end{equation*}%
holds. In particular,
\begin{equation*}
F\cap P\left( \eta ,u^{j}\left( \eta \right) \right) \cap \left( \eta
+\varepsilon \overline{\mathbf{B}}\right) \subset \eta -\rho \eta ^{\ast
}+\rho \left\Vert \eta ^{\ast }\right\Vert \overline{\mathbf{B}}\mathbf{,}
\end{equation*}%
which implies 
\begin{equation*}
\left\Vert \zeta -\eta +\rho \eta ^{\ast }\right\Vert ^{2}\leq \rho
^{2}\left\Vert \eta ^{\ast }\right\Vert ^{2},
\end{equation*}%
whenever $\zeta \in F\cap P\left( \eta ,u^{j}\left( \eta \right) \right) $
with $\left\Vert \zeta -\eta \right\Vert =\varepsilon $, or, in another form,
\begin{equation}
\left\langle \zeta -\eta ,\eta ^{\ast }\right\rangle \leq -\frac{\varepsilon
^{2}}{2\rho }.  \label{19}
\end{equation}%
If $w\in F\cap P\left( \eta ,u^{j}\left( \eta \right) \right) $ is an
arbitrary point with $\left\Vert w-\eta \right\Vert \geq \varepsilon $ then
setting $\zeta :=\lambda w+\left( 1-\lambda \right) \eta $ in $F\cap P\left(
\eta ,u^{j}\left( \eta \right) \right) $, where $\lambda :=\frac{\varepsilon 
}{\left\Vert w-\eta \right\Vert }\leq 1$, we have%
\begin{equation*}
\left\Vert \zeta -\eta \right\Vert =\left\Vert \lambda w+\left( 1-\lambda
\right) \eta -\eta \right\Vert =\lambda \left\Vert w-\eta \right\Vert
=\varepsilon
\end{equation*}%
and%
\begin{equation*}
\left\langle \eta -\zeta ,\eta ^{\ast }\right\rangle =\lambda \left\langle
\eta -w,\eta ^{\ast }\right\rangle .
\end{equation*}%
By (\ref{19}) we obtain 
\begin{equation*}
\frac{\varepsilon ^{2}}{2\rho }\leq \left\langle \eta -\zeta ,\eta ^{\ast
}\right\rangle =\lambda \left\langle \eta -w,\eta ^{\ast }\right\rangle \leq
\left\langle \eta -w,\eta ^{\ast }\right\rangle ,
\end{equation*}%
so%
\begin{equation*}
\frac{\varepsilon ^{2}}{2\rho }\leq \inf \left\{ \left\langle \eta -w,\eta
^{\ast }\right\rangle :w\in F\cap P\left( \eta ,u^{j}\left( \eta \right)
\right) ,\left\Vert w-\eta \right\Vert \geq \varepsilon \right\} .
\end{equation*}%
Hence, passing to $\lim \inf $ as $\varepsilon \rightarrow 0^{+},\eta
\rightarrow \xi $ and $\rho \rightarrow R^{+}$ we conclude the first part of
the proof.

In order to show the opposite inequality let us assume that $R>0$ (the case $%
R=0$ is trivial). If $0<\rho <R$ then, by the definition of $\lim \sup $
there are $\varepsilon >0$ arbitrarily small and $\eta \in \partial F$
arbitrarily closed to $\xi $, such that%
\begin{equation*}
\inf \left\{ r>0:F\cap P\left( \eta ,u^{j}\left( \eta \right) \right) \cap
\left( \eta +\varepsilon \overline{\mathbf{B}}\right) \subset \eta -r\eta
^{\ast }+r\left\Vert \eta ^{\ast }\right\Vert \overline{\mathbf{B}}\right\}
>\rho .
\end{equation*}%
Then the set $F\cap P\left( \eta ,u^{j}\left( \eta \right) \right) \cap
\left( \eta +\varepsilon \overline{\mathbf{B}}\right) $ is not contained in $%
\eta -\rho \eta ^{\ast }+\rho \left\Vert \eta ^{\ast }\right\Vert \overline{%
\mathbf{B}}$, or, in other words, there is $\zeta \in F\cap P\left( \eta
,u^{j}\left( \eta \right) \right) $ with $\left\Vert \zeta -\eta \right\Vert
\leq \varepsilon $ such that 
\begin{equation*}
\left\Vert \zeta -\eta +\rho \eta ^{\ast }\right\Vert >\rho \left\Vert \eta
^{\ast }\right\Vert .
\end{equation*}%
Consequently, setting, $r:=\left\Vert \zeta -\eta \right\Vert \leq
\varepsilon $ we have%
\begin{equation*}
2\rho \left\langle \eta -\zeta ,\eta ^{\ast }\right\rangle <\left\Vert \zeta
-\eta \right\Vert ^{2}=r^{2}.
\end{equation*}%
So
\begin{eqnarray*}
\widehat{\mathfrak{C}}_{F}\left( r,\eta ,u^{j}\left( \eta \right) \right)
&=&\inf \left\{ \left\langle \eta -w,\eta ^{\ast }\right\rangle :w\in F\cap
P\left( \eta ,u^{j}\left( \eta \right) \right) ,\left\Vert w-\eta
\right\Vert \geq r\right\} \\
&\leq &\left\langle \eta -\zeta ,\eta ^{\ast }\right\rangle <\frac{r^{2}}{%
2\rho }.
\end{eqnarray*}%
Passing to $\lim \inf $ as $r\rightarrow 0^{+},$ $\eta \rightarrow \xi $ and
then to $\lim $ as $\rho \rightarrow R^{-}$ we conclude the proof.
\end{proof}

\section{Relation with the usual curvature formula for implicit space curves}

In this section we will compare the formula obtained in Theorem \ref{Teo1}
with the usual curvature formula for implicit space curves (that is, curves
in $\mathbb{R}^{n}$ generated by the intersection of $n-1$ implicit
equations). More precisely we will compare (\ref{R3}) with the formula
obtained by R. Goldman in \cite{Goldman}. To do this we will consider,
separately, the cases $n=2$ and $n\geq 3$.

For $n=2$, under our assumptions, near a fixed $\xi \in \partial F$ the
curve $\partial F\cap P\left( \xi ,u\left( \xi \right) \right) $ is given by 
$f\left( \eta \right) =0,$ and the tangent line at $\xi $ is $\mathbf{T}%
_{F}\left( \xi \right) =\mbox{span}\left\{ \left( -f_{2}\left( \xi
\right) ,f_{1}\left( \xi \right) \right) \right\} $ (see before Proposition %
\ref{n2}). By (\ref{Cn2}), for any $u\left( \xi \right) \in \mathbf{T}%
_{F}\left( \xi \right) \backslash \left\{ \mathbf{0}\right\} ,$ that is for $%
u\left( \xi \right) =\lambda \left( -f_{2}\left( \xi \right) ,f_{1}\left(
\xi \right) \right) $, with $\lambda \in \mathbb{R}\backslash \left\{
0\right\} $, we have
\begin{eqnarray*}
\hat{\varkappa}_{F}\left( \xi ,u\left( \xi \right) \right) &=&\frac{1}{%
2\left\Vert \nabla f\left( \xi \right) \right\Vert \left\Vert u\left( \xi
\right) \right\Vert ^{2}}\left\langle \nabla ^{2}f\left( \xi \right) u\left(
\xi \right) ,u\left( \xi \right) \right\rangle \\
&=&\frac{\left[ 
\begin{array}{cc}
-f_{2}\left( \xi \right) & f_{1}\left( \xi \right)%
\end{array}%
\right] \left[ 
\begin{array}{cc}
f_{11}\left( \xi \right) & f_{12}\left( \xi \right) \\ 
f_{12}\left( \xi \right) & f_{22}\left( \xi \right)%
\end{array}%
\right] \left[ 
\begin{array}{c}
-f_{2}\left( \xi \right) \\ 
f_{1}\left( \xi \right)%
\end{array}%
\right] }{2\left( f_{1}^{2}\left( \xi \right) +f_{2}^{2}\left( \xi \right)
\right) ^{\frac{3}{2}}}=\frac{1}{2}k_{G}\left( \xi \right) ,
\end{eqnarray*}%
where $k_{G}\left( \xi \right) $ is the curvature given by R. Goldman in 
\cite[(3.4)]{Goldman}. So, when $n=2$ the formulas coincide to less than the
constant $\frac{1}{2}.$ As we will see, we will obtain the same conclusion
for $n\geq 3$, but, in this case, the formula in \cite{Goldman} is more
difficult to apply than ours.

Before analyzing the case $n\geq 3,$ we need to introduce a generalization
to the cross product from $3$-dimensions to $n$-dimensions, called external
product.

\begin{definition}
\cite[p.165]{Greub} The \emph{external product} of two vectors in an $n$%
-dimensional space, $n\geq 3$, spanned by $e_{1},...,e_{n}$ is a vector in a
space of dimension $\frac{n\left( n-1\right) }{2}$ spanned by a new
collection of vectors denoted by $\left\{ e_{i}\wedge e_{j}\right\} $, where 
$i<j$. Let $u=u_{1}e_{1}+...+u_{n}e_{n}$ and $v=v_{1}e_{1}+...+v_{n}e_{n}$
then%
\begin{equation*}
u\wedge v=\sum_{i<j}\det \left[ 
\begin{array}{cc}
u_{i} & u_{j} \\ 
v_{i} & v_{j}%
\end{array}%
\right] \left( e_{i}\wedge e_{j}\right) .
\end{equation*}
\end{definition}

For the next definition, as well as for the rest of the work, we just need
to compute the magnitude of the external product, which is given by the
formula%
\begin{equation}
\left\Vert u\wedge v\right\Vert ^{2}=\sum_{i<j}\left( \det \left[ 
\begin{array}{cc}
u_{i} & u_{j} \\ 
v_{i} & v_{j}%
\end{array}%
\right] \right) ^{2}.  \label{quadrado}
\end{equation}%
Assuming that $e_{i}$, $i=1,...,n,$ is the vector of $\ \mathbb{R}^{n} \ $ with
one in the $i$th position and zero everywhere else, and that $e:=\left(
e_{1},...,e_{n}\right) $ is the canonical basis of $\mathbb{R}^{n}$, we are
in conditions to see the usual curvature formula for implicit space curves
(see, for example, \cite[(5.4)]{Goldman}).

\begin{definition}
\cite[(5.4)]{Goldman}\label{G} The curvature formula for a point $\xi $ on
the curve defined by the intersection of $n-1$ implicit hypersurfaces $%
F_{1}\left( x_{1},...,x_{n}\right) =0,...,F_{n-1}\left(
x_{1},...,x_{n}\right) =0$ is 
\begin{equation}
k_{G}\left( \xi \right) =\frac{\left\Vert \left( \mbox{Tan}\left(
F_{1},...,F_{n_{-1}}\right) \left( \xi \right) \ast \nabla \left(\mbox{Tan}\left( F_{1},...,F_{n_{-1}}\right) \right) \left( \xi \right) \right)
\wedge \mbox{Tan}\left( F_{1},...,F_{n_{-1}}\right) \left( \xi \right)
\right\Vert }{\left\Vert \mbox{Tan}\left( F_{1},...,F_{n_{-1}}\right)\left( \xi \right) \right\Vert ^{3}},  \label{goldman}
\end{equation}%
where $\mbox{Tan}\left( F_{1},...,F_{n_{-1}}\right) (\xi )$ is the
tangent to the intersection curve at $\xi $ given by%
\begin{eqnarray*}
\mbox{Tan}\left( F_{1},...,F_{n_{-1}}\right) \left( \xi \right) &=&\det %
\left[ 
\begin{array}{c}
e \\ 
\nabla F_{1}\left( \xi \right) \\ 
\vdots \\ 
\nabla F_{n-1}\left( \xi \right)%
\end{array}%
\right] \\
&=&\det \left[ 
\begin{array}{ccc}
e_{1} & \cdots & e_{n} \\ 
F_{1x_{1}}\left( \xi \right) & \cdots & F_{1x_{n}}\left( \xi \right) \\ 
\vdots & \vdots & \vdots \\ 
F_{n-1x_{1}}\left( \xi \right) & \cdots & F_{n-1x_{n}}\left( \xi \right)%
\end{array}\right],
\end{eqnarray*}%
$\nabla \left( \mbox{Tan}\left( F_{1},...,F_{n_{-1}}\right) \right)
\left( \xi \right) $ is the $n\times n$ matrix in where each column is the
gradiente of the respective component of the row matrix $\mbox{Tan}\left(
F_{1},...,F_{n_{-1}}\right)$, with the derivatives calculated at $\xi $,
and $\ast $ represents the product between matrices.
\end{definition}

To make the desired comparison, we still need to introduce some notation.
Fixed $\xi \in \partial F$, $i\in I$ (given by (\ref{fi})) and $j\in
I\backslash \left\{ i\right\} $, for any $k\in I\backslash \left\{
i,j\right\} $ put%
\begin{equation*}
a_{kl}\left( \xi \right) :=-\frac{f_{x_{l}}\left( \xi \right)
f_{x_{k}}\left( \xi \right) }{f_{x_{i}}^{2}\left( \xi \right)
+f_{x_{j}}^{2}\left( \xi \right) },\text{ }\ \ l=i,j,
\end{equation*}%
and%
\begin{equation*}
p_{k\xi }\left( \eta _{1},...,\eta _{n}\right) :=\eta _{k}-\xi
_{k}+a_{ki}\left( \xi \right) \left( \eta _{i}-\xi _{i}\right) +a_{kj}\left(
\xi \right) \left( \eta _{j}-\xi _{j}\right) \text{, \ \ \ \ }\left( \eta
_{1},...,\eta _{n}\right) \in \mathbb{R}^{n}.
\end{equation*}%
Using the definition of generated space it is easy to show that%
\begin{equation*}
P\left( \xi ,u^{j}\left( \xi \right) \right) =\bigcap\limits_{k\in
I\backslash \left\{ i,j\right\} }\left\{ \eta \in \mathbb{R}^{n}:p_{k\xi
}\left( \eta \right) =0\right\} .
\end{equation*}%
At the neighbourhood $\xi +\delta ^{\prime }\mathbf{B}$ ($\delta ^{\prime
}>0 $ is given by Remark \ref{r1}) the curve $\partial F\cap P\left( \xi
,u^{j}\left( \xi \right) \right) $ is given by the intersection of $n-1$
implicit equations:%
\begin{equation*}
f\left( \eta \right) =0,\ \ \ \ p_{k_{1}\xi }\left( \eta \right)
=0,...,p_{k_{n-2}\xi }\left( \eta \right) =0,
\end{equation*}%
$k_{1},...,k_{n-2}\in I\backslash \left\{ i,j\right\} $ and $%
k_{1}<...<k_{n-2}.$

Next we will compute the curvature for this curve in the sense of Definition %
\ref{G}.

\begin{theorem}
\label{Teo2}We have%
\begin{equation*}
k_{G}\left( \xi \right) =\frac{\left\vert f_{x_{i}x_{i}}\left( \xi \right)
f_{x_{j}}^{2}\left( \xi \right) -2f_{x_{i}}\left( \xi \right)
f_{x_{j}}\left( \xi \right) f_{x_{i}x_{j}}\left( \xi \right)
+f_{x_{j}x_{j}}\left( \xi \right) f_{x_{i}}^{2}\left( \xi \right)
\right\vert }{\left\Vert \nabla f\left( \xi \right) \right\Vert \left(
f_{x_{i}}^{2}\left( \xi \right) +f_{x_{j}}^{2}\left( \xi \right) \right) },
\end{equation*}%
where%
\begin{equation*}
f_{x_{l}x_{m}}\left( \xi \right) :=\frac{\partial }{\partial x_{m}}\left( 
\frac{\partial f}{\partial x_{l}}\right) \left( \xi \right) ,\text{ }\ \ \
m,l\in \left\{ i,j\right\} .
\end{equation*}
\end{theorem}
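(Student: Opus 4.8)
The plan is to substitute the data of the curve $C:=\partial F\cap P(\xi,u^{j}(\xi))$ — which near $\xi$ is cut out by $f=0,\ p_{k_{1}\xi}=0,\dots,p_{k_{n-2}\xi}=0$ — into Goldman's formula (\ref{goldman}) and check that everything collapses. The first step is structural. By the Laplace expansion defining $\mbox{Tan}$, placing any $\nabla F_{m}$ in the top row yields a determinant with two equal rows, so the vector field $x\mapsto\mbox{Tan}(f,p_{k_{1}\xi},\dots,p_{k_{n-2}\xi})(x)$ is orthogonal to $\nabla f(x)$ and to each (constant) gradient $\nabla p_{k_{m}\xi}$. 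A direct computation shows $\nabla p_{k\xi}\perp\nabla f(\xi)$ and $\nabla p_{k\xi}\perp u^{j}(\xi)$ for every $k\in I\setminus\{i,j\}$, and since the $n-2$ vectors $\nabla p_{k_{m}\xi}$ are linearly independent we get $\bigcap_{m}(\nabla p_{k_{m}\xi})^{\perp}=\mbox{span}\{\nabla f(\xi),u^{j}(\xi)\}$ (the direction space of $P(\xi,u^{j}(\xi))$). Hence $\mbox{Tan}(x)=\alpha(x)\nabla f(\xi)+\beta(x)u^{j}(\xi)$ with $\alpha,\beta$ of class $\mathcal{C}^{1}$, and because $u^{j}(\xi)\perp\nabla f(\xi)$ the identity $\langle\mbox{Tan}(\xi),\nabla f(\xi)\rangle=0$ forces $\alpha(\xi)=0$, i.e. $\mbox{Tan}(\xi)=\beta(\xi)u^{j}(\xi)$. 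A short linear-independence check for the gradients $\nabla f(\xi),\nabla p_{k_{1}\xi},\dots,\nabla p_{k_{n-2}\xi}$ — using $f_{x_{i}}(\xi)\neq0$ from (\ref{fi}) and $\sum_{k\in I\setminus\{i,j\}}f_{x_{k}}^{2}(\xi)=\|\nabla f(\xi)\|^{2}-f_{x_{i}}^{2}(\xi)-f_{x_{j}}^{2}(\xi)$ — shows $\mbox{Tan}(\xi)\neq\mathbf{0}$, so $\beta(\xi)\neq0$ and $\|\mbox{Tan}(\xi)\|^{3}=|\beta(\xi)|^{3}\|u^{j}(\xi)\|^{3}$.

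Next I compute the numerator of (\ref{goldman}). With $\nabla(\mbox{Tan})$ the matrix whose columns are the gradients of the components of $\mbox{Tan}$, the product $\mbox{Tan}\ast\nabla(\mbox{Tan})$ is exactly the derivative of the vector field $\mbox{Tan}$ in its own direction, so at $\xi$ it equals $\beta(\xi)\frac{d}{dt}\mbox{Tan}(\xi+tu^{j}(\xi))|_{t=0}=\beta(\xi)(\dot\alpha(0)\nabla f(\xi)+\dot\beta(0)u^{j}(\xi))$. To obtain $\dot\alpha(0)$ I differentiate the identity $\langle\alpha(x)\nabla f(\xi)+\beta(x)u^{j}(\xi),\nabla f(x)\rangle=0$ along $x(t)=\xi+tu^{j}(\xi)$ at $t=0$; using $\alpha(\xi)=0$, $u^{j}(\xi)\perp\nabla f(\xi)$ and $\frac{d}{dt}\nabla f(x(t))|_{t=0}=\nabla^{2}f(\xi)u^{j}(\xi)$, it reduces to $\dot\alpha(0)\|\nabla f(\xi)\|^{2}+\beta(\xi)\langle\nabla^{2}f(\xi)u^{j}(\xi),u^{j}(\xi)\rangle=0$. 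Taking the external product of $\mbox{Tan}\ast\nabla(\mbox{Tan})$ with $\mbox{Tan}(\xi)=\beta(\xi)u^{j}(\xi)$, the $u^{j}(\xi)\wedge u^{j}(\xi)$ term vanishes by antisymmetry, leaving $-\beta(\xi)^{3}\frac{\langle\nabla^{2}f(\xi)u^{j}(\xi),u^{j}(\xi)\rangle}{\|\nabla f(\xi)\|^{2}}\nabla f(\xi)\wedge u^{j}(\xi)$; since $\nabla f(\xi)\perp u^{j}(\xi)$, (\ref{quadrado}) (Lagrange's identity) gives $\|\nabla f(\xi)\wedge u^{j}(\xi)\|=\|\nabla f(\xi)\|\,\|u^{j}(\xi)\|$.

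Dividing by $\|\mbox{Tan}(\xi)\|^{3}$, the factor $|\beta(\xi)|^{3}$ and one power of $\|u^{j}(\xi)\|$ cancel, so
\begin{equation*}
k_{G}(\xi)=\frac{\left\vert\langle\nabla^{2}f(\xi)u^{j}(\xi),u^{j}(\xi)\rangle\right\vert}{\|\nabla f(\xi)\|\,\|u^{j}(\xi)\|^{2}}.
\end{equation*}
It only remains to expand in coordinates: $u^{j}(\xi)$ has $1$ in the $j$th place, $-f_{x_{j}}(\xi)/f_{x_{i}}(\xi)$ in the $i$th place and $0$ elsewhere, whence $\|u^{j}(\xi)\|^{2}=(f_{x_{i}}^{2}(\xi)+f_{x_{j}}^{2}(\xi))/f_{x_{i}}^{2}(\xi)$ and $\langle\nabla^{2}f(\xi)u^{j}(\xi),u^{j}(\xi)\rangle=(f_{x_{i}x_{i}}(\xi)f_{x_{j}}^{2}(\xi)-2f_{x_{i}}(\xi)f_{x_{j}}(\xi)f_{x_{i}x_{j}}(\xi)+f_{x_{j}x_{j}}(\xi)f_{x_{i}}^{2}(\xi))/f_{x_{i}}^{2}(\xi)$; substituting gives the asserted formula. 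A shortcut is also available: since (\ref{goldman}) computes the intrinsic curvature of $C$ and $C$ lies in the $2$-plane $P(\xi,u^{j}(\xi))$, one may read it off in the orthonormal frame $\{\nabla f(\xi)/\|\nabla f(\xi)\|,\ u^{j}(\xi)/\|u^{j}(\xi)\|\}$ of that plane, where the curve is $\tilde f(s,w):=f(\xi+s\nabla f(\xi)/\|\nabla f(\xi)\|+wu^{j}(\xi)/\|u^{j}(\xi)\|)=0$ with $\tilde f_{w}(\xi)=0$ and $\tilde f_{s}(\xi)=\|\nabla f(\xi)\|$, so the planar case treated at the beginning of this section yields $k_{G}(\xi)=|\tilde f_{ww}(\xi)|/\|\nabla f(\xi)\|$, the same value.

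I expect the main difficulty to lie in the first paragraph — proving that $\mbox{Tan}(x)$ stays inside $\mbox{span}\{\nabla f(\xi),u^{j}(\xi)\}$ and that it does not degenerate at $\xi$ — and in making the external-product manipulations (bilinearity, antisymmetry, and $\|a\wedge b\|=\|a\|\,\|b\|$ for orthogonal $a,b$) rigorous directly from (\ref{quadrado}).
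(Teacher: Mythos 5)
Your argument is correct and reaches the stated formula, but by a genuinely different route from the paper. The paper proceeds by brute force: using (\ref{deriv}) it expands the defining determinant along the first row, writes out each cofactor $(-1)^{1+m}\det A_{m\xi}(\eta)$ case by case, recognises the resulting vector as the explicit multiple $(-1)^{i+j}\frac{\Vert\nabla f(\xi)\Vert^{2}f_{x_{i}}(\xi)}{f_{x_{i}}^{2}(\xi)+f_{x_{j}}^{2}(\xi)}u^{j}(\xi)$ of $u^{j}(\xi)$ in (\ref{tan}), and then computes $\mbox{Tan}\ast\nabla(\mbox{Tan})$ and the magnitude of the external product directly from (\ref{quadrado}) (a step it compresses into ``after some calculations''). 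You instead exploit the structure of the generalized cross product: orthogonality to all defining gradients pins $\mbox{Tan}(x)$ inside the fixed plane $\mbox{span}\{\nabla f(\xi),u^{j}(\xi)\}$, the decomposition $\mbox{Tan}=\alpha\nabla f(\xi)+\beta u^{j}(\xi)$ with $\alpha(\xi)=0$ identifies $\mbox{Tan}(\xi)$ as a multiple of $u^{j}(\xi)$ without computing the multiple, and differentiating the orthogonality relation along $u^{j}(\xi)$ produces exactly the coefficient $-\beta(\xi)\langle\nabla^{2}f(\xi)u^{j}(\xi),u^{j}(\xi)\rangle/\Vert\nabla f(\xi)\Vert^{2}$ that survives the wedge with $\mbox{Tan}(\xi)$; all the $\beta$'s then cancel. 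Your route buys conceptual clarity --- it shows the answer is the normal curvature $\vert\langle\nabla^{2}f(\xi)u,u\rangle\vert/(\Vert\nabla f(\xi)\Vert\,\Vert u\Vert^{2})$ independently of the particular auxiliary hyperplanes $p_{k\xi}=0$, and it avoids the case analysis on $i<j$ versus $i>j$ --- at the cost of the preliminary structural lemmas (orthogonality of the formal determinant to its rows, Lagrange's identity $\Vert a\wedge b\Vert^{2}=\Vert a\Vert^{2}\Vert b\Vert^{2}-\langle a,b\rangle^{2}$ from (\ref{quadrado}), and the $\mathcal{C}^{1}$ dependence of $\alpha,\beta$). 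The only place you should tighten the write-up is the claim $\mbox{Tan}(\xi)\neq\mathbf{0}$: the clean justification is that $\nabla f(\xi)\neq\mathbf{0}$ is orthogonal to the $n-2$ linearly independent constant vectors $\nabla p_{k\xi}$, so the $n-1$ gradients are linearly independent and their generalized cross product has nonzero Gram determinant; the sum-of-squares identity you invoke is not by itself the argument. With that repaired, the proof is complete and equivalent in content to the paper's.
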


\begin{proof}
For $k\in I\backslash \left\{ i,j\right\} $ and $x:=\left(
x_{1},...,x_{n}\right) \in \mathbb{R}^{n}$ fixed, we have%
\begin{equation}
\frac{\partial p_{k\xi }}{\partial x_{m}}\left( x\right) =\left\{ 
\begin{array}{ll}
1, & \text{if }m=k \\ 
a_{km}\left( \xi \right) , & \text{if }m=i\text{ or }m=j \\ 
0, & \text{otherwise}%
\end{array}%
\right. .  \label{deriv}
\end{equation}%
So the tangent to the intersection curve $\partial F\cap P\left( \xi
,u^{j}\left( \xi \right) \right) $ at $\eta \in \left( \xi +\delta ^{\prime }%
\mathbf{B}\right) \cap \partial F\cap P\left( \xi ,u^{j}\left( \xi \right)
\right) $ is given by 
\begin{eqnarray*}
\mbox{Tan}\left( f,p_{k_{1}\xi },...,p_{k_{n-2}\xi }\right) \left( \eta
\right) &=&\det \left[ 
\begin{array}{c}
e \\ 
\nabla f\left( \eta \right) \\ 
\nabla p_{k_{1}\xi }\left( \eta \right) \\ 
\vdots \\ 
\nabla p_{k_{n-2}\xi }\left( \eta \right)%
\end{array}%
\right] \\
&=&\sum_{m=1}^{n}\left( -1\right) ^{1+m}\det A_{m\xi }\left( \eta \right)
e_{m},
\end{eqnarray*}%
where, for each $m\in I$, $A_{m\xi }\left( \eta \right) $ is the matrix
obtained from $\left[ 
\begin{array}{c}
e \\ 
\nabla f\left( \eta \right) \\ 
\nabla p_{k_{1}\xi }\left( \eta \right) \\ 
\vdots \\ 
\nabla p_{k_{n-2}\xi }\left( \eta \right)%
\end{array}%
\right] $ eliminating the first line and the $m$th column. Remembering that
we have (\ref{deriv}) for each $r\in \left\{ 1,n-2\right\} $, then%
\begin{equation*}
\left( -1\right) ^{1+m}\det A_{m\xi }\left( \eta \right) =\left( -1\right)
^{i+j}\left\{ 
\begin{array}{cc}
\sum_{r=1}^{n-2}f_{x_{k_{r}}}\left( \eta \right) a_{k_{r}j}\left( \xi
\right) -f_{x_{j}}\left( \eta \right) ,\medskip & \text{if }i<j\text{ and }%
m=i \\ 
f_{x_{i}}\left( \eta \right) -\sum_{r=1}^{n-2}f_{x_{k_{r}}}\left( \eta
\right) a_{k_{r}i}\left( \xi \right) ,\medskip & \text{if }i<j\text{ and }m=j
\\ 
f_{x_{j}}\left( \eta \right) -\sum_{r=1}^{n-2}f_{x_{k_{r}}}\left( \eta
\right) a_{k_{r}j}\left( \xi \right) ,\medskip & \text{if }i>j\text{ and }m=i
\\ 
\sum_{r=1}^{n-2}f_{x_{k_{r}}}\left( \eta \right) a_{k_{r}i}\left( \xi
\right) -f_{x_{i}}\left( \eta \right) ,\medskip & \text{if }i>j\text{ and }%
m=j \\ 
f_{x_{j}}\left( \eta \right) a_{k_{m}i}\left( \xi \right) -f_{x_{i}}\left(
\eta \right) a_{k_{m}j}\left( \xi \right) ,\medskip & \text{otherwise.}%
\end{array}%
\right.
\end{equation*}%
Let $\omega ^{j}\left( \xi \right) \in \mathbb{R}^{n}$ the vector with $%
\left( -1\right) ^{i+j+1}f_{x_{j}}\left( \xi \right) $ in the $i$th
coordinate, $\left( -1\right) ^{i+j}f_{x_{i}}\left( \xi \right) $ at the $j$%
th coordinate, if $i<j$, or with symmetrical values if $i>j$, and $0$
elsewhere. It is easy to show that 
\begin{eqnarray*}
\sum_{m=1}^{n}\left( -1\right) ^{1+m}\det A_{m}\left( \xi \right) e_{m} &=&%
\frac{\left\Vert \nabla f\left( \xi \right) \right\Vert ^{2}}{%
f_{x_{i}}^{2}\left( \xi \right) +f_{x_{j}}^{2}\left( \xi \right) } \ \omega
^{j}\left( \xi \right) \\
&=&\left( -1\right) ^{i+j}\frac{\left\Vert \nabla f\left( \xi \right)
\right\Vert ^{2}f_{x_{i}}\left( \xi \right) }{f_{x_{i}}^{2}\left( \xi
\right) +f_{x_{j}}^{2}\left( \xi \right) } \ u^{j}\left( \xi \right),
\end{eqnarray*}%
that is 
\begin{equation}
\mbox{Tan}\left( f,p_{k_{1}},...,p_{k_{n-2}}\right) \left( \xi \right)
=\left( -1\right) ^{i+j}\frac{\left\Vert \nabla f\left( \xi \right)
\right\Vert ^{2}f_{x_{i}}\left( \xi \right) }{f_{x_{i}}^{2}\left( \xi
\right) +f_{x_{j}}^{2}\left( \xi \right) } \ u^{j}\left( \xi \right).
\label{tan}
\end{equation}%
After some calculations we conclude that%
\begin{equation*}
\mbox{Tan}\left( f,p_{k_{1}},...,p_{k_{n-2}}\right) \left( \xi \right)
\ast \nabla \left( \mbox{Tan}\left( f,p_{k_{1}},...,p_{k_{n-2}}\right)
\right) \left( \xi \right) =\frac{\left\Vert \nabla f\left( \xi \right)
\right\Vert ^{2}}{f_{x_{i}}^{2}\left( \xi \right) +f_{x_{j}}^{2}\left( \xi
\right) } \ M,
\end{equation*}%
where $M$ is a line matrix. Using (\ref{quadrado}) we obtain%
\begin{eqnarray*}
&&\left\Vert \left( \mbox{Tan}\left( f,p_{k_{1}},...,p_{k_{n-2}}\right)
\left( \xi \right) \ast \nabla \left( \mbox{Tan}\left(
f,p_{k_{1}},...,p_{k_{n-2}}\right) \right) \left( \xi \right) \right) \wedge 
\mbox{Tan}\left( f,p_{k_{1}},...,p_{k_{n-2}}\right) \left( \xi \right)
\right\Vert ^{2} \\
&=&\left( \frac{\left\Vert \nabla f\left( \xi \right) \right\Vert ^{2}}{%
f_{x_{i}}^{2}\left( \xi \right) +f_{x_{j}}^{2}\left( \xi \right) }\right)
^{5}\left(f_{x_{i}x_{i}}\left( \xi \right) f_{x_{j}}^{2}\left( \xi \right)
-2f_{x_{i}}\left( \xi \right) f_{x_{j}}\left( \xi \right)
f_{x_{i}x_{j}}\left( \xi \right) +f_{x_{j}x_{j}}\left( \xi \right)
f_{x_{i}}^{2}\left( \xi \right) \right) ^{2}.
\end{eqnarray*}%
Which implies%
\begin{eqnarray*}
&&\left\Vert \left( \mbox{Tan}\left( f,p_{k_{1}},...,p_{k_{n-2}}\right)
\left( \xi \right) \ast \nabla \left( \mbox{Tan}\left(
f,p_{k_{1}},...,p_{k_{n-2}}\right) \right) \left( \xi \right) \right) \wedge 
\mbox{Tan}\left( f,p_{k_{1}},...,p_{k_{n-2}}\right) \left( \xi \right)
\right\Vert \\
&=&\frac{\left\Vert \nabla f\left( \xi \right) \right\Vert ^{5}}{\left(
f_{x_{i}}^{2}\left( \xi \right) +f_{x_{j}}^{2}\left( \xi \right) \right) ^{%
\frac{5}{2}}}\left\vert f_{x_{i}x_{i}}\left( \xi \right) f_{x_{j}}^{2}\left(
\xi \right) -2f_{x_{i}}\left( \xi \right) f_{x_{j}}\left( \xi \right)
f_{x_{i}x_{j}}\left( \xi \right) +f_{x_{j}x_{j}}\left( \xi \right)
f_{x_{i}}^{2}\left( \xi \right) \right\vert ,
\end{eqnarray*}%
and consequently (see (\ref{tan}))%
\begin{eqnarray*}
k_{G}\left( \xi \right) &=&\frac{\frac{\left\Vert \nabla f\left( \xi \right)
\right\Vert ^{5}}{\left( f_{x_{i}}^{2}\left( \xi \right)
+f_{x_{j}}^{2}\left( \xi \right) \right) ^{\frac{5}{2}}}\left\vert
f_{x_{i}x_{i}}\left( \xi \right) f_{x_{j}}^{2}\left( \xi \right)
-2f_{x_{i}}\left( \xi \right) f_{x_{j}}\left( \xi \right)
f_{x_{i}x_{j}}\left( \xi \right) +f_{x_{j}x_{j}}\left( \xi \right)
f_{x_{i}}^{2}\left( \xi \right) \right\vert }{\left( \frac{\left\Vert \nabla
f\left( \xi \right) \right\Vert ^{2}\left\vert f_{x_{i}}\left( \xi \right)
\right\vert }{f_{x_{i}}^{2}\left( \xi \right) +f_{x_{j}}^{2}\left( \xi
\right) }\left\Vert u^{j}\left( \xi \right) \right\Vert \right) ^{3}} \\
&=&\frac{\left\vert f_{x_{i}x_{i}}\left( \xi \right) f_{x_{j}}^{2}\left( \xi
\right) -2f_{x_{i}}\left( \xi \right) f_{x_{j}}\left( \xi \right)
f_{x_{i}x_{j}}\left( \xi \right) +f_{x_{j}x_{j}}\left( \xi \right)
f_{x_{i}}^{2}\left( \xi \right) \right\vert }{\left\Vert \nabla f\left( \xi
\right) \right\Vert \left( f_{x_{i}}^{2}\left( \xi \right)
+f_{x_{j}}^{2}\left( \xi \right) \right) }.
\end{eqnarray*}
\end{proof}\\

Remembering (\ref{R3}), and that $\hat{\varkappa}_{F}\left( \xi ,u^{j}\left(
\xi \right) \right) \geq 0$ because $\widehat{\mathfrak{C}}_{F}\left( r,\eta
,u^{j}\left( \xi \right) \right) \geq 0$ for every $r>0$ and every $\eta \in
\partial F$ close enough to $\xi $, it is easy to show the next result.

\begin{corollary}
We have%
\begin{equation}
k_{G}\left( \xi \right) =2\hat{\varkappa}_{F}\left( \xi ,u^{j}\left( \xi
\right) \right) .  \label{igualdade}
\end{equation}
\end{corollary}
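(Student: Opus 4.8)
The plan is simply to insert the explicit coordinates of the tangent vector $u^{j}\left( \xi \right) $ into formula (\ref{R3}) and to compare the result with the expression for $k_{G}\left( \xi \right) $ established in Theorem \ref{Teo2}. Recall that $u^{j}\left( \xi \right) $ is, by its very definition, the vector of $\mathbb{R}^{n}$ having $1$ in the $j$th coordinate, $-f_{x_{j}}\left( \xi \right) /f_{x_{i}}\left( \xi \right) $ in the $i$th coordinate and $0$ everywhere else, so that
\begin{equation*}
\left\Vert u^{j}\left( \xi \right) \right\Vert ^{2}=1+\frac{f_{x_{j}}^{2}\left( \xi \right) }{f_{x_{i}}^{2}\left( \xi \right) }=\frac{f_{x_{i}}^{2}\left( \xi \right) +f_{x_{j}}^{2}\left( \xi \right) }{f_{x_{i}}^{2}\left( \xi \right) }.
\end{equation*}

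Next I would compute the quadratic form. Since only the $i$th and $j$th entries of $u^{j}\left( \xi \right) $ are non-zero, only the $2\times 2$ block of $\nabla ^{2}f\left( \xi \right) $ indexed by $i,j$ contributes, and, using the symmetry $f_{x_{i}x_{j}}\left( \xi \right) =f_{x_{j}x_{i}}\left( \xi \right) $ (valid since $f\in \mathcal{C}^{2}$), one gets
\begin{equation*}
\left\langle \nabla ^{2}f\left( \xi \right) u^{j}\left( \xi \right) ,u^{j}\left( \xi \right) \right\rangle =\frac{f_{x_{i}x_{i}}\left( \xi \right) f_{x_{j}}^{2}\left( \xi \right) -2f_{x_{i}}\left( \xi \right) f_{x_{j}}\left( \xi \right) f_{x_{i}x_{j}}\left( \xi \right) +f_{x_{j}x_{j}}\left( \xi \right) f_{x_{i}}^{2}\left( \xi \right) }{f_{x_{i}}^{2}\left( \xi \right) }.
\end{equation*}
Substituting both identities into (\ref{R3}), the common factor $f_{x_{i}}^{2}\left( \xi \right) $ cancels and one obtains
\begin{equation*}
\hat{\varkappa}_{F}\left( \xi ,u^{j}\left( \xi \right) \right) =\frac{f_{x_{i}x_{i}}\left( \xi \right) f_{x_{j}}^{2}\left( \xi \right) -2f_{x_{i}}\left( \xi \right) f_{x_{j}}\left( \xi \right) f_{x_{i}x_{j}}\left( \xi \right) +f_{x_{j}x_{j}}\left( \xi \right) f_{x_{i}}^{2}\left( \xi \right) }{2\left\Vert \nabla f\left( \xi \right) \right\Vert \left( f_{x_{i}}^{2}\left( \xi \right) +f_{x_{j}}^{2}\left( \xi \right) \right) }.
\end{equation*}

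Finally, because $\widehat{\mathfrak{C}}_{F}\left( r,\eta ,u^{j}\left( \eta \right) \right) \geq 0$ for all small $r>0$ and all $\eta \in \partial F$ near $\xi $, we have $\hat{\varkappa}_{F}\left( \xi ,u^{j}\left( \xi \right) \right) \geq 0$; hence the numerator in the last display is non-negative and coincides with its own absolute value. Comparing with the formula for $k_{G}\left( \xi \right) $ in Theorem \ref{Teo2} then yields (\ref{igualdade}) at once. There is no genuine obstacle here — it is a direct substitution — and the only point deserving attention is precisely this sign matter, which is why the non-negativity of the directional curvature must be invoked in order to reconcile the formula of Theorem \ref{Teo2} (written with an absolute value) with (\ref{R3}).
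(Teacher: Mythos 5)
Your proof is correct and follows exactly the route the paper intends: substitute the explicit coordinates of $u^{j}\left( \xi \right) $ into (\ref{R3}), simplify, and use the non-negativity of $\hat{\varkappa}_{F}\left( \xi ,u^{j}\left( \xi \right) \right) $ (coming from $\widehat{\mathfrak{C}}_{F}\geq 0$) to drop the absolute value in the formula of Theorem \ref{Teo2}. The paper leaves this computation to the reader with precisely that remark about non-negativity, so you have simply supplied the details it omits.
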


Furthermore, following the proof of Theorem \ref{Teo2} it is easy to see
that we have $k_{G}\left( \eta \right) =2\hat{\varkappa}_{F}\left( \eta
,u^{j}\left( \eta \right) \right) $, for any $\eta \in \partial F$ close
enough to $\xi .$

So, we have the equality (\ref{igualdade}) for every $n\in \mathbb{N}$ and
for every $\eta \in \partial F$ close enough to $\xi $. For $n\geq 3,$ if
our conditions are verified, as we can easily see, the formula (\ref{goldman}%
) is harder to apply than formula (\ref{R3}), that is, it is more laborious
to calculate $k_{G}\left( \eta \right) $ than $\hat{\varkappa}_{F}\left(
\eta ,u^{j}\left( \eta \right) \right) $. Furthermore, the difficulty
increases as $n$ increases, and with (\ref{R3}) it is easy to calculate the
curvature in different directions, while using (\ref{goldman}) this would be
very laborious, since we would have to repeat all the calculations made in
the proof of Theorem \ref{Teo2} for each new $u^{j}$.

\section{Examples}

\begin{enumerate}
\item \label{Ex1}Consider the compact convex set $F\subset \mathbb{R}^{2}$,
with $\left( 0,0\right) $ in its interior, 
\begin{equation*}
F=\left\{ \left( x_{1},x_{2}\right) \in \mathbb{R}^{2}:\left\vert
x_{2}\right\vert \leq 1-x_{1}^{4},\;-1\leq x_{1}\leq 1\right\}.
\end{equation*}%
Close to $\xi =\left( \xi _{1},\xi _{2}\right) \in \partial F$ with $\xi
_{2}>0$ (the case $\xi _{2}<0$ is analogous) we have $f\left(
x_{1},x_{2}\right) :=x_{2}-1+x_{1}^{4}$, 
\begin{equation*}
\nabla f\left( \xi \right) =\left( 4\xi _{1}^{3},1\right),\text{ }\ \nabla
^{2}f\left( \xi \right) =\left[ 
\begin{array}{cc}
12\xi _{1}^{2} & 0 \\ 
0 & 0%
\end{array}%
\right],
\end{equation*}%
and%
\begin{equation*}
\mathbf{T}_{F}\left( \xi \right) =\mbox{span}\left\{ \left( 1,-4\xi
_{1}^{3}\right) \right\} .
\end{equation*}%
Consequently, for any $u\left( \xi \right) \in \mathbf{T}_{F}\left( \xi
\right) \backslash \left\{ \left( 0,0\right) \right\} ,$%
\begin{equation*}
\hat{\varkappa}_{F}\left( \xi ,u\left( \xi \right) \right) =\frac{12\xi
_{1}^{2}}{2\sqrt{\left( 4\xi _{1}^{3}\right) ^{2}+1} \ \left( \left( 4\xi
_{1}^{3}\right) ^{2}+1\right) }=\frac{6\xi _{1}^{2}}{\left( 16\xi
_{1}^{6}+1\right) ^{\frac{3}{2}}}.
\end{equation*}%
Recalling Proposition \ref{n2}, we can see that Theorem \ref{Teo1} allows us
to obtain the following equality 
\begin{equation*}
\hat{\varkappa}_{F}\left( \xi \right) =\frac{6\xi _{1}^{2}}{\left( 16\xi
_{1}^{6}+1\right) ^{\frac{3}{2}}},
\end{equation*}%
whereas in \cite[Example 8.3]{GP1} we had obtained only the inequalities 
\begin{equation*}
\frac{6\xi _{1}^{2}}{\sqrt{1+16\xi _{1}^{6}}\ \Sigma ^{2}\left( \xi
_{1}\right) }\leq \hat{\varkappa}_{F}\left( \xi \right) \leq \frac{6\xi
_{1}^{2}}{\sqrt{1+16\xi _{1}^{6}}},
\end{equation*}%
where $\Sigma \left( \xi _{1}\right) :=\sqrt{1+\left( \underset{k=0}{\overset{3} {\sum }}\left\vert \xi _{1}\right\vert ^{k}\right)^{2}}$.

Note that at $\xi =\left( 0,\pm 1\right) $ we have $\hat{\varkappa}%
_{F}\left( \xi ,u\left( \xi \right) \right) =0$, as would be expected. Here
we can't calculate the curvature at $\xi =\left( \pm 1,0\right) $ because
there isn't a $\mathcal{C}^{2}$ function $f$ checking our conditions, but in 
\cite[Example 8.3]{GP1} there is an estimate for the curvature at such
points.

\item Let $F$ a sphere in $\mathbb{R}^{n}$%
\begin{equation*}
\left\{ x=\left( x_{1},...,x_{n}\right) :\sum_{t=1}^{n}x_{t}^{2}\leq
R^{2}\right\} .
\end{equation*}%
Consider $f\left( x\right) =\sum_{t=1}^{n}x_{t}^{2}-R^{2}$ for $x$ near a fixed $\xi \in \partial F.$ We have%
\begin{equation*}
\nabla f\left( \xi \right) =2\xi ,\text{ }\ \nabla ^{2}f\left( \xi \right) =2%
\mathbf{I}_{n},
\end{equation*}%
where $\mathbf{I}_{n}$ is the identity matrix of the order $n$. Fix the
first $i\in I:=\left\{ 1,...,n\right\} $ such that $f_{x_{i}}\left( \xi
\right) =2\xi _{i}\neq 0$, then $\mathbf{T}_{F}\left( \xi \right) $ is
spanned by $n-1$ vectors $u^{j}\left( \xi \right) \in \mathbb{R}^{n}$, $j\in
I\backslash \left\{ i\right\} ,$ with $1$ in the $j$th coordinate$,$ $-\frac{%
\xi _{j}}{\xi _{i}}$ in the $i$th coordinate and $0$ in the others. Therefore
\begin{equation*}
\hat{\varkappa}_{F}\left( \xi ,u^{j}\left( \xi \right) \right) =\frac{%
2\left( \left( \frac{\xi _{j}}{\xi _{i}}\right) ^{2}+1\right) ^{2}}{%
4\left\Vert \xi \right\Vert \left\Vert u^{j}\left( \xi \right) \right\Vert
^{2}}=\frac{1}{2R},
\end{equation*}%
which means that the curvature at any point of the boundary of the sphere,
in the direction of any vector of its hyperplane tangent, is equal to $\frac{%
1}{2R}.$

\item Consider a cylinder%
\begin{equation*}
F_{a,b}=\left\{ \left( x_{1},x_{2},x_{3}\right) \in \mathbb{R}%
^{3}:x_{1}^{2}+x_{3}^{2}\leq a^{2},\ \left\vert x_{2}\right\vert \leq
b\right\} \text{, \ \ }a,b\in \mathbb{R}^{+}.
\end{equation*}%
Near $\xi =\left( \xi _{1},\xi _{2},\xi _{3}\right) \in \partial F_{a,b}$,
with $\xi _{1}^{2}+\xi _{3}^{2}=a^{2},\ \left\vert \xi _{2}\right\vert <b,$
put $f\left( x_{1},x_{2},x_{3}\right) :=x_{1}^{2}+x_{3}^{2}-a^{2}$. We have%
\begin{equation*}
\nabla f\left( \xi \right) =2\left( \xi _{1},0,\xi _{3}\right) ,\text{ }\
\nabla ^{2}f\left( \xi \right) =\left[ 
\begin{array}{ccc}
2 & 0 & 0 \\ 
0 & 0 & 0 \\ 
0 & 0 & 2%
\end{array}%
\right],
\end{equation*}%
and, fixed the first $i\in \left\{ 1,3\right\} $ such that $f_{x_{i}}\left(
\xi \right) \neq 0$, we have%
\begin{equation*}
\mathbf{T}_{F_{a,b}}\left( \xi \right) =\left\{ \left(
v_{1},v_{2},v_{3}\right) :v_{i}=-\frac{\xi _{j}}{\xi _{i}}v_{j}, \ j\in
\left\{ 1,3\right\} \backslash \left\{ i\right\}, \ v_{2}\in \mathbb{R}%
\right\}.
\end{equation*}%
Then%
\begin{equation*}
\left\langle \nabla ^{2}f\left( \xi \right) u^{j}\left( \xi \right)
,u^{j}\left( \xi \right) \right\rangle =\left\{ 
\begin{array}{ll}
0,\smallskip & \text{if }j=2 \\ 
2\left( \frac{\xi _{j}}{\xi _{i}}\right) ^{2}+2,\ \  & \text{if }j\in
\left\{ 1,3\right\} \backslash \left\{ i\right\}%
\end{array}%
\right.
\end{equation*}%
and consequently%
\begin{equation*}
\hat{\varkappa}_{F_{a,b}}\left( \xi ,u^{j}\left( \xi \right) \right)
=\left\{ 
\begin{array}{ll}
0,\smallskip & \text{if }j=2 \\ 
\frac{1}{2a},\ \  & \text{if }j\in \left\{ 1,3\right\} \backslash \left\{
i\right\}%
\end{array}%
\right. .
\end{equation*}%
If we consider $u\left( \xi \right) =\alpha u^{2}\left( \xi \right) +\beta
u^{j}\left( \xi \right) $, for any $\alpha ,\beta \in \mathbb{R}$ we will
obtain%
\begin{equation*}
\hat{\varkappa}_{F_{a,b}}\left( \xi ,u\left( \xi \right) \right) =\frac{%
\beta ^{2}a}{2\left( a^{2}\beta ^{2}+\xi _{1}^{2}\alpha ^{2}\right) }\in %
\left] 0,\frac{1}{2a}\right[ .
\end{equation*}

Now fix $\xi =\left( \xi _{1},\xi _{2},\xi _{3}\right) \in \partial F_{a,b}$
with $\xi _{1}^{2}+\xi _{3}^{2}<a^{2}$ and $\xi _{2}=b$ (the case $\xi
_{2}=-b$ is analogous). Near $\xi $ we have $f\left(
x_{1},x_{2},x_{3}\right) :=x_{2}-b,$ 
\begin{equation*}
\nabla f\left( \xi \right) =\left( 0,1,0\right) ,\text{ }\ \mathbf{T}%
_{F_{a,b}}\left( \xi \right) =\mbox{span}\left\{ \left( 1,0,0\right)
,\left( 0,0,1\right) \right\}
\end{equation*}%
and $\nabla ^{2}f\left( \xi \right) $ is the zero matrix of the order $n$. So%
\begin{equation*}
\hat{\varkappa}_{F_{a,b}}\left( \xi ,u\left( \xi \right) \right) =0,\text{ \
\ }\forall u\left( \xi \right) \in \mathbf{T}_{F_{a,b}}\left( \xi \right)
\backslash \left\{ \left( 0,0,0\right) \right\} .
\end{equation*}
\end{enumerate}

\bigskip

\paragraph{Acknowledgements}

I would like to thank Professor Giovanni Colombo, University of Padova
(Italy), who encouraged me to complete this work.

\bigskip

This paper has been partially supported by Center for Research in
Mathematics and Applications (CIMA) related with the Differential Equations
and Optimization (DEO) group, through the grant UIDB/04674/2020 of FCT-Funda%
\c{c}\~{a}o para a Ci\^{e}ncia e a Tecnologia, Portugal.\\\

\end{document}